\DeclareMathOperator{\rank}{rank}
\let\hom\relax
\DeclareMathOperator{\hom}{Hom}
\DeclareMathOperator{\coker}{coker}
\def\co{\colon\thinspace}
\newcommand{\PD}{\operatorname{PD}}
\newcommand{\Z}{\mathbb Z}
\newcommand{\Q}{\mathbb Q}
\def\spinc{\ifmmode{\textrm{Spin}^c}\else{$\textrm{Spin}^c$}\fi}
\newcommand{\spincs}{\mathfrak s}
\newcommand{\spinct}{\mathfrak t}
\newcommand{\spincu}{\mathfrak u}
\renewcommand{\phi}{\varphi}
\newtheorem{theorem}{Theorem}[section]
\newtheorem{lemma}[theorem]{Lemma}
\newtheorem{proposition}[theorem]{Proposition}
\newtheorem{corollary}[theorem]{Corollary}
\theoremstyle{definition}
\newtheorem{definition}[theorem]{Definition}
\newtheorem{example}[theorem]{Example}
\numberwithin{equation}{section}
\def\K{\mathbb{K}}
\def\L{\mathbb{L}}
\let\@wraptoccontribs\wraptoccontribs
\DeclareMathOperator{\Spin}{Spin}
\def\s{\mathfrak{s}}
\def\CF {\operatorname{CF}}
\def\HF {\operatorname{HF}}
\def\HFp {\operatorname{HF}^+}
\def\HFm {\operatorname{HF}^-}
\def\HFi {\operatorname{HF}^\infty}
\def\CFp {\operatorname{CF}^+}
\def\CFm {\operatorname{CF}^-}
\def\CFi {\operatorname{CF}^\infty}
\def\dtop {d_{\operatorname{top}}}
\def\dbot {d_{\operatorname{bot}}}
\DeclareMathOperator{\Tors}{Tors}
\DeclareMathOperator{\gr}{gr}
\DeclareMathOperator{\nbd}{nbd}
\DeclareMathOperator{\im}{im}
\DeclareMathOperator{\st}{st}
\def\HFcirc {\operatorname{HF}^\circ}
\def\DD {\mathcal{D}}
\def\II {\mathcal{I}}
\def\KK {\mathcal{K}}
\def\QQ {\mathcal{Q}}
\def\KQ {\mathcal{KQ}}
\def\QK {\mathcal{QK}}
\newcommand{\abs}[1] {\left\lvert #1 \right\rvert}
\newcommand{\gen}[1] {\langle #1 \rangle}
\def\conn{\mathbin{\#}}
\def\minus{\smallsetminus}
\newcommand{\arxiv}[1]{\href{http://arxiv.org/abs/#1}{arXiv:#1}}
\title{Generalized Heegaard Floer correction terms}
\author[Adam Simon Levine]{Adam Simon Levine}
\address{Department of Mathematics \newline\indent Princeton University \newline\indent Princeton, NJ 08540}
\email{asl2@math.princeton.edu}
\author[Daniel Ruberman]{Daniel Ruberman}
\address{Department of Mathematics, MS 050\newline\indent Brandeis
University \newline\indent Waltham, MA 02454}
\email{ruberman@brandeis.edu}
\thanks{The first author was partially supported by an NSF Postdoctoral Fellowship.  The second author was partially supported by NSF Grant 1105234.
\\
Math.~Subj.~Class.~2010: 57M27, 57R58, 57Q60.}
\begin{document}
\begin{abstract}
We make use of the action of $H_1(Y)$ in Heegaard Floer homology to generalize the Ozsv\'ath-Szab\'o correction terms for $3$-manifolds with standard $\HFi$, introducing {\em intermediate $d$-invariants}.   We establish the basic properties of these invariants: conjugation invariance, behavior under orientation reversal, additivity, and \spinc-rational homology cobordism invariance.
\end{abstract}
\maketitle

\section{Introduction}

In this paper, we study a generalized version of the $d$-invariants, or correction terms, defined by Ozsv\'ath and Szab\'o in~\cite{oz:boundary} via gradings in Heegaard Floer theory.  These invariants --- a rational number $d(Y, \spincs)$ associated to each \spinc\ structure $\spincs$ on a rational homology $3$-sphere $Y$ --- have proven to be very useful in low-dimensional topology.  The numbers $d(Y,\spincs)$ are (\spinc) rational homology cobordism invariants; they are the Heegaard Floer analogues of the $h$-invariants introduced by Fr{\o}yshov~\cite{froyshov:equivariant,froyshov:boundary} in the setting of Yang-Mills or Seiberg-Witten gauge theory.  By passing to branched covers or surgery on a knot $K$, one derives knot invariants that can be used to elucidate the difference between smooth and topological knot concordance or bound the smooth $4$-genus of a knot; see for instance~\cite{manolescu-owens:delta, jabuka-naik:dcover, hedden-livingston-ruberman:alexander, hedden-kim-livingston:2-torsion, peters:d-invariant, levine:infinite, owens-strle:qhs4b}.

As explained in~\cite{oz:boundary}, certain $d$-invariants can be defined for a $3$-manifold $Y$ that is not a rational homology sphere, as long as the \spinc\ structure has torsion first Chern class, and the Floer homology of $Y$ is `standard' in a sense that we will explain below in Section~\ref{S:correction}.  The simplest such invariant, denoted in the current paper by $\dbot(Y,\spincs)$, was introduced in~\cite[\S 9]{oz:boundary} via an action of $H_1(Y)/\mathrm{torsion}$. By computing this invariant for a particular \spinc\ structure on a non-trivial oriented circle bundle over an oriented surface, Ozsv\'ath and Szab\'o gave a new proof of the Thom conjecture~\cite{kronheimer-mrowka:thom}, along the lines introduced in~\cite{strle:intersections}.   A recent paper by the authors and S.~Strle~\cite{levine-ruberman-strle:surfaces} makes use of $\dbot(Y,\spincs)$ and a `dual' invariant $\dtop(Y,\spincs)$ to study embeddings of non-orientable surfaces in $4$-manifolds.  A key component of that paper is the calculation of these correction terms for a torsion \spinc\ structure on a non-orientable circle bundle over a non-orientable surface.

Experts in the field have understood that there should be a more comprehensive theory of $d$-invariants, of which $\dbot$ and $\dtop$ would be special (but important!) cases. We provide such a theory in this paper, making use of the action of the exterior algebra $\Lambda^*(H_1(Y)/\mathrm{torsion})$ to introduce {\em intermediate correction terms}  $d(Y,\spincs,V)$ for any subspace $V \subset H_1(Y)/\mathrm{torsion}$ and torsion \spinc\ structure $\spincs$, so long as $Y$ has standard $\HFi$ (which is true if the triple cup product on $H^1(Y;\Z)$ vanishes identically~\cite{lidman:hfinfinity}). We establish the basic properties of these invariants, including conjugation invariance (Proposition~\ref{prop:conjugation}), behavior under orientation reversal (Proposition~\ref{prop:duality}), additivity under connected sum (Proposition~\ref{prop:additivity}), and \spinc-rational homology cobordism invariance (Proposition~\ref{prop:QHcob}); we also show how they constrain the exoticness of the intersection forms of negative-definite $4$-manifolds bounded by a given $3$-manifold (Corollary~\ref{cor:intform}). Many of these results are adapted from~\cite{oz:boundary}, but the algebra involved is significantly trickier. We also give an application to link concordance in Section~\ref{S:links}.

\subsection*{Acknowledgements}
This work arose out of our joint project with Sa\v{s}o Strle~\cite{levine-ruberman-strle:surfaces}, to whom we are greatly indebted. We are also grateful to Peter Ozsv\'ath and Tye Lidman for helpful conversations.

\section{Algebraic preliminaries} \label{S:algebra}

We begin with some algebraic preliminaries concerning modules over exterior algebras.

Let $H$ be a finitely generated, free abelian group. Let $\Lambda = \Lambda^* H$ denote the exterior algebra of $H$ over $\Z$, which a graded-commutative ring. For any subspace $V \subset H$, let $\II^V V$ denote the (two-sided) ideal in $\Lambda^* H$ generated by $V$.

A \emph{$\Lambda$--module of homological type (resp.~cohomological type)} is a $\Q$--graded abelian group $M$ along with an action of $\Lambda$ on $M$ so that $\Lambda^i H$ takes the part in grading $r$ to the part in grading $r-i$ (resp.~$r+i$). We write the grading as a subscript ($M = \bigoplus_r M_r$) when $M$ is of homological type and as a superscript ($M = \bigoplus_r M^r$) when $M$ is of cohomological type. All $\Lambda$--modules will be assumed to be of either homological or cohomological type.

For any $\Lambda$--module $M$ (of homological type) and any subgroup $V \subset H$, let $\KK^V (M)$ denote the kernel of the action of $V$ on $M$, i.e.,
\[
\KK^V (M) = \{ x \in M \mid v \cdot x = 0 \ \forall v \in V\}.
\]
The part in grading $r$ is denoted $\KK^V_r (M)$. We write $\KK (M)$ for $\KK^H (M)$. Note that
\[
\KK^V (M) \cong \hom_\Lambda (\Lambda/\II^V, M).
\]
Similarly, let $\QQ^V (M)$ denote the quotient
\[
\QQ^V M = M / (\II^V \cdot M) = M \otimes_\Lambda \Lambda/\II^V,
\]
and again denote the part in grading $r$ by $\QQ^V_r(M)$. We write $\QQ (M)$ for $\QQ^H (M)$.

The exactness properties of Hom and tensor product imply that $\KK^V$ is a left-exact functor and $\QQ^V$ is a right-exact functor. Furthermore, the action of $\Lambda^* H$ on both $\KK^V (M)$ and $\QQ^V (M)$ descends to an action of $\Lambda^* ((H/V)/\Tors)$. We shall be particularly interested in $\QQ(\KK^V(M))$ and $\KK(\QQ^V(M))$, which we simply denote by $\QK^V(M)$ and $\KQ^V(M)$.

Likewise, for a module $M$ of cohomological type, we write $\KK_V(M)$ for the kernel and $\QQ_V(M)$ for the quotient, and indicate the grading with a superscript. For the rest of this section, we will implicitly use modules of homological type, but identical results hold for modules of cohomological type.

Let $M^*$ denote the dual of $M$, i.e.
\[
M^* = \hom_\Lambda (M, \Lambda).
\]
The part of $M^*$ in grading $r$ is simply $\hom_\Z(M_r,\Z)$, and the action of elements of $\Lambda$ is induced from the action of $\Lambda$ on $M$. If $M$ is of homological type, then $M^*$ is of cohomological type, and vice versa. The following lemma says that the functors $\QQ^V$ and $\KK^V$ are interchanged under taking duals:

\begin{lemma} \label{lemma:QKduality}
For any $\Lambda$--module $M$,
$(\QQ^V(M))^* \cong \KK_V(M^*)$ and $(\KK^V(M))^* \cong \QQ_V (M^*)$.
\end{lemma}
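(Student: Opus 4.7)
The first isomorphism $(\QQ^V(M))^* \cong \KK_V(M^*)$ is a direct application of tensor--hom adjunction. Writing $\QQ^V(M) = M \otimes_\Lambda \Lambda/\II^V$ and using commutativity of the tensor product to swap the two factors, I would compute
\[
(\QQ^V(M))^* = \hom_\Lambda(M \otimes_\Lambda \Lambda/\II^V,\, \Lambda) \cong \hom_\Lambda(\Lambda/\II^V,\, \hom_\Lambda(M, \Lambda)) = \hom_\Lambda(\Lambda/\II^V,\, M^*) = \KK_V(M^*),
\]
carefully tracking grading conventions so that both sides are seen to be of cohomological type with matching grading (the homological-type grading on $M$ becomes cohomological on $M^*$, and passing to $\KK_V$ does not shift grading).

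For the second isomorphism $(\KK^V(M))^* \cong \QQ_V(M^*)$, I would deduce it from the first by duality rather than redo the calculation. Applying the first isomorphism to the cohomological-type module $M^*$ in place of $M$ gives $(\QQ_V(M^*))^* \cong \KK^V(M^{**})$; under the canonical reflexivity identification $M^{**} \cong M$, this becomes $(\QQ_V(M^*))^* \cong \KK^V(M)$. Dualizing once more and invoking reflexivity of $\QQ_V(M^*)$ then yields $\QQ_V(M^*) \cong (\KK^V(M))^*$.

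The main obstacle is the reflexivity step. Because the convention $(M^*)^r = \hom_\Z(M_r, \Z)$ makes $M^*$ automatically $\Z$-torsion-free in each grading, the isomorphism $M \cong M^{**}$ holds only under the assumption that each graded piece of $M$ is finitely generated and free, and similarly for $\QQ_V(M^*)$. An alternative direct argument that avoids reflexivity is to construct the restriction map $M^* \to (\KK^V(M))^*$, $\phi \mapsto \phi|_{\KK^V(M)}$; verify that $\II^V \cdot M^*$ lies in its kernel via the one-line computation $(v\phi)(m) = \phi(vm) = 0$ for $v \in V$ and $m \in \KK^V(M)$; and then exploit the Frobenius/self-injective nature of the exterior algebra $\Lambda = \Lambda^* H$—in particular the self-duality $\hom_\Lambda(\Lambda/\II^V, \Lambda) \cong \Lambda/\II^V$ up to a grading shift by the top exterior power of $V$—to show the kernel is exactly $\II^V \cdot M^*$ and that the map is surjective.
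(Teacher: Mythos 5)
Your treatment of the first isomorphism coincides with the paper's: both are the tensor--hom adjunction applied to $\QQ^V(M)=M\otimes_\Lambda \Lambda/\II^V$, and that part is fine.

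The gap is in the second isomorphism, and it is not repaired by either of your two routes. The double-dualization route needs not only $M^{**}\cong M$ but also reflexivity of $\QQ_V(M^*)$, and the latter fails in general because a quotient can acquire $\Z$-torsion even when every graded piece of $M$ is finitely generated and free; so the extra hypothesis you propose does not save the argument (and the lemma is asserted for arbitrary $M$). Concretely, take $H=\Z$ generated by $v$, $V=H$, and $M=\Z x\oplus\Z y$ with $v\cdot x=2y$, $v\cdot y=0$: then $M^*\cong \Z\phi_1\oplus\Z\phi_2$ with $v\cdot\phi_2=2\phi_1$, so $\QQ_V(M^*)\cong\Z\oplus\Z/2$ while $(\KK^V(M))^*\cong\Z$, and your route would only produce the torsion-free quotient $(\QQ_V(M^*))^{**}$. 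The same example shows that in your fallback sketch the kernel of the restriction map $M^*\to(\KK^V(M))^*$ is strictly larger than $\II^V\cdot M^*$; thus the two assertions you leave unproved --- that the kernel is exactly $\II^V\cdot M^*$ and that the map is onto --- are precisely the content of the statement and do not follow from the formal observations you cite (they do hold over a field, or integrally up to torsion under mild finiteness, which is what the applications in the paper actually use, so your worry about torsion is pertinent rather than a technicality to be waved away). The paper's own proof takes a different and shorter path that involves no double dualization and no reflexivity: it writes $(\KK^V(M))^*=\hom_\Lambda(\hom_\Lambda(\Lambda/\II^V,M),\Lambda)\cong\hom_\Lambda(M,\Lambda/\II^V)\cong\hom_\Lambda(M,\Lambda)\otimes_\Lambda\Lambda/\II^V=\QQ_V(M^*)$, citing the finite generation of $\Lambda/\II^V$; the Frobenius self-duality $\hom_\Lambda(\Lambda/\II^V,\Lambda)\cong\Lambda/\II^V$ (up to a shift) that you mention at the end is exactly the ingredient that powers such a direct Hom--tensor chain, so the way to complete your proof is to carry out that chain directly rather than detour through $M^{**}$.
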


\begin{proof}
By basic properties of the Hom and tensor product functors, we have:
\begin{align*}
(\QQ^V(M))^* &\cong \hom_\Lambda (M \otimes_\Lambda \Lambda/\II^V, \Lambda) \\
&\cong \hom_\Lambda (\Lambda/\II^V, \hom_\Lambda(M,\Lambda))) \\
&\cong \KK_V (M^*)
\end{align*}
and, since $\Lambda/\II^V$ is finitely generated,
\begin{align*}
(\KK^V(M))^* &\cong \hom_\Lambda (\hom_\Lambda(\Lambda/\II^V, M), \Lambda) \\
&\cong \hom_\Lambda (M, \Lambda/\II^V) \\
&\cong \hom_\Lambda (M, \Lambda) \otimes_\Lambda \Lambda/\II^V \\
&\cong \QQ_V (M^*). \qedhere
\end{align*}
\end{proof}

Next, suppose that $H = H_2 \oplus H_2$. Write $\Lambda_1 = \Lambda^*(H_1)$ and $\Lambda_2 = \Lambda^*(H_2)$, so that $\Lambda = \Lambda_1 \otimes_\Z \Lambda_2$. If $M_1$ and $M_2$ are modules over $\Lambda_1$ and $\Lambda_2$, respectively, then $M_1 \otimes_\Z M_2$ is a module over $\Lambda$.

\begin{lemma} \label{lemma:directsum}
For any modules $M_1$ over $\Lambda_1$ and $M_2$ over $\Lambda_2$, and subgroups $V_1 \subset H_1$ and $V_2 \subset H_2$, \[
\KK^{V_1 \oplus V_2}(M_1 \otimes_\Z M_2) \cong \KK^{V_1}(M_1) \otimes_\Z \KK^{V_2}(M_2)
\]
and
\[
\QQ^{V_1 \oplus V_2}(M_1 \otimes_\Z M_2) \cong \QQ^{V_1}(M_1) \otimes_\Z \QQ^{V_2}(M_2).
\]
\end{lemma}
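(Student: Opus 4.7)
The plan is to reduce both identities to the algebra-level decomposition $\Lambda\cong\Lambda_1\otimes_\Z\Lambda_2$: under this isomorphism the generating set $V_1\oplus V_2$ of $\II^{V_1\oplus V_2}$ splits as $(V_1\otimes 1)+(1\otimes V_2)$, so the two-sided ideal $\II^{V_1\oplus V_2}\subset\Lambda$ equals $\II^{V_1}\otimes_\Z\Lambda_2+\Lambda_1\otimes_\Z\II^{V_2}$, and the actions of $V_1$ and $V_2$ on $M_1\otimes_\Z M_2$ take place entirely through the first and second tensor factor respectively. Both identities in the lemma are then classical statements about how cokernels and kernels interact with tensor products.

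The $\QQ$ identity is the formal one. I would compute
\[
\II^{V_1\oplus V_2}\cdot(M_1\otimes_\Z M_2)\;=\;(\II^{V_1}M_1)\otimes_\Z M_2\;+\;M_1\otimes_\Z(\II^{V_2}M_2),
\]
and then apply the standard identification $(A\otimes B)/(A_0\otimes B+A\otimes B_0)\cong(A/A_0)\otimes(B/B_0)$ to conclude $\QQ^{V_1\oplus V_2}(M_1\otimes_\Z M_2)\cong\QQ^{V_1}(M_1)\otimes_\Z\QQ^{V_2}(M_2)$. No torsion-freeness is needed here.

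For the $\KK$ identity, the inclusion $\KK^{V_1}(M_1)\otimes_\Z\KK^{V_2}(M_2)\subseteq\KK^{V_1\oplus V_2}(M_1\otimes_\Z M_2)$ is transparent, since an elementary tensor $x\otimes y$ with $x,y$ annihilated respectively by $V_1,V_2$ is killed by everything in $V_1\oplus V_2$. For the reverse inclusion I would first establish the two one-factor statements
\[
\KK^{V_1\oplus 0}(M_1\otimes_\Z M_2)=\KK^{V_1}(M_1)\otimes_\Z M_2,\qquad \KK^{0\oplus V_2}(M_1\otimes_\Z M_2)=M_1\otimes_\Z\KK^{V_2}(M_2),
\]
by expressing a general element as $\sum x_i\otimes y_i$ with the $y_i$ (resp.\ $x_i$) $\Z$-linearly independent and observing that $\sum(v_1x_i)\otimes y_i=0$ then forces $v_1x_i=0$ for each $i$. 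Intersecting the two subgroups and using $\Z$-splittings $M_1=\KK^{V_1}(M_1)\oplus M_1'$ and $M_2=\KK^{V_2}(M_2)\oplus M_2'$ to express $M_1\otimes_\Z M_2$ as a four-summand direct sum then exhibits the intersection as precisely $\KK^{V_1}(M_1)\otimes_\Z\KK^{V_2}(M_2)$.

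The main obstacle is the flatness/splitting used in the last step: identifying $\ker(\phi_{v_1}\otimes\mathrm{id}_{M_2})$ with $\ker(\phi_{v_1})\otimes_\Z M_2$ and splitting $\KK^{V_i}(M_i)$ off as a $\Z$-summand of $M_i$ both require some torsion-freeness, and in fact a small counterexample with $M_2=\Z/2$ shows the $\KK$ identity genuinely fails without it. This is automatic in the Heegaard Floer applications that follow, where the modules of interest are free abelian groups (or vector spaces over a field). Should a fully general algebraic statement be desired, one could avoid the basis argument entirely by combining the representability $\KK^V(-)=\hom_\Lambda(\Lambda/\II^V,-)$ from earlier in the section with the ring isomorphism $\Lambda/\II^{V_1\oplus V_2}\cong\Lambda_1/\II^{V_1}\otimes_\Z\Lambda_2/\II^{V_2}$ and a $\hom$--tensor adjunction, in the same spirit as the proof of Lemma~\ref{lemma:QKduality}.
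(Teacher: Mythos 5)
Your argument is correct on the modules where the lemma is actually used, but it takes a different route from the paper. The paper's proof is purely formal: it uses the representations $\KK^V(M)\cong\hom_\Lambda(\Lambda/\II^V,M)$ and $\QQ^V(M)\cong M\otimes_\Lambda\Lambda/\II^V$, the ring isomorphism $\Lambda/\II^{V_1\oplus V_2}\cong(\Lambda_1/\II^{V_1})\otimes_\Z(\Lambda_2/\II^{V_2})$, and K\"unneth-type factorizations of $\hom$ and $\otimes$ over $\Lambda_1\otimes_\Z\Lambda_2$ --- exactly the alternative you sketch in your final sentence. Your main line of argument is instead an explicit computation with submodules and elements: for $\QQ$ you identify the ideal's image directly (this is essentially equivalent to the paper's tensor computation and, as you say, needs no hypotheses), while for $\KK$ you use bases and $\Z$-splittings, which requires the modules to be free (or at least flat, plus a splitting) over $\Z$. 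Your observation that the $\KK$ isomorphism genuinely fails for, say, $M_2=\Z/2$ is correct and is a real point: the paper's statement ``for any modules'' is too generous, and the same implicit hypothesis is buried in its $\hom$-factorization step, i.e.\ in the map $\hom_{\Lambda_1}(N_1,M_1)\otimes_\Z\hom_{\Lambda_2}(N_2,M_2)\to\hom_{\Lambda_1\otimes\Lambda_2}(N_1\otimes N_2,M_1\otimes M_2)$ being an isomorphism, which your example also defeats. In the Heegaard Floer applications (Proposition \ref{prop:additivity}) the modules are subgroups of a standard $\HFi$, hence free abelian, so both your element-level argument and the paper's formal one apply; what the paper's approach buys is brevity and uniformity with Lemma \ref{lemma:QKduality}, while yours makes the needed flatness hypothesis visible.
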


\begin{proof}
Note that $\Lambda / \II^{V_1 \oplus V_2} \cong (\Lambda_1/\II^{V_1}) \otimes_\Z (\Lambda_2/\II^{V_2})$. Therefore, we have:
\begin{align*}
\KK^{V_1 \oplus V_2}(M_1 \otimes_\Z M_2) &\cong \hom_{\Lambda} (M_1 \otimes_\Z M_2, \Lambda / \II^{V_1 \oplus V_2} ) \\
&\cong \hom_{\Lambda_1 \otimes_\Z \Lambda_2} (M_1 \otimes_\Z M_2, (\Lambda_1/\II^{V_1}) \otimes_\Z (\Lambda_2/\II^{V_2}) )\\
&\cong \hom_{\Lambda_1} (M_1, \Lambda_1/\II^{V_1}) \otimes_\Z \hom_{\Lambda_2} (M_2, \Lambda_2/\II^{V_2}) \\
&\cong \KK^{V_1}(M_1) \otimes_\Z \KK^{V_2}(M_2)
\end{align*}
and
\begin{align*}
\QQ^{V_1 \oplus V_2}(M_1 \otimes_\Z M_2) &\cong (M_1 \otimes_\Z M_2) \otimes_{\Lambda} (\Lambda / \II^{V_1 \oplus V_2} ) \\
&\cong (M_1 \otimes_\Z M_2) \otimes_{\Lambda_1 \otimes_\Z \Lambda_2} ( (\Lambda_1/\II^{V_1}) \otimes_\Z (\Lambda_2/\II^{V_2}) ) \\
&\cong (M_1 \otimes_{\Lambda_1} \Lambda_1/\II^{V_1} ) \otimes_\Z ( M_2 \otimes_{\Lambda_2} \Lambda_2/\II^{V_2} ) \\
&\cong \QQ^{V_1}(M_1) \otimes_\Z \QQ^{V_2}(M_2). \qedhere
\end{align*}
\end{proof}

Next, let $H^*$ be the dual of $H$, which is isomorphic to $H$ (non-canonically) since $H$ is finite dimensional, and let $M^{\st} = \Lambda^* H^* \otimes \Z[U,U^{-1}]$, graded so that $\Lambda^i H^* \otimes U^n$ is in grading $i - 2n$. This is naturally a module over $\Lambda \otimes \Z[U]$ of homological type, where, for $v \in \Lambda^*H$ and $\lambda \in \Lambda^* H^*$, we have
\[
(v \otimes U^k) \cdot (\lambda \otimes U^l) = i_v (\lambda) \otimes U^{k+l}
\]
where $i_v$ denotes contraction by $v$. If $\rank H = n > 0$, then $M_i \cong \Z^{2^{n-1}}$ for every $i \in Z$.

A subgroup $V \subset H$ is called \emph{primitive} if the quotient $H/V$ is free abelian. The following lemmas concerning $M^{\st}$ are easy to verify:

\begin{lemma} \label{lemma:lift}
If $v_1, \dots, v_k$ are elements of a basis for $H$, and $x \in M^{\st}_i$ satisfies $(v_1 \wedge \dots \wedge v_k) \cdot x = 0$, then there exists $x' \in M_{i+k}$ such that $(v_1 \wedge \dots \wedge v_k) \cdot x' = x$.
\end{lemma}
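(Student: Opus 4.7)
The plan is to prove the lemma by an explicit calculation after passing to a preferred basis. I extend $v_1, \ldots, v_k$ to a $\Z$-basis $v_1, \ldots, v_n$ of $H$ and let $e_1, \ldots, e_n$ denote the dual basis of $H^*$. Relative to this choice, $M^{\st}$ has the standard $\Z$-basis of monomials $e_S \otimes U^l$ indexed by subsets $S \subseteq \{1, \ldots, n\}$ and integers $l$, with $e_S \otimes U^l$ sitting in grading $|S| - 2l$.

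With respect to this basis, the operator $v_1 \wedge \cdots \wedge v_k$ acts by a transparent rule: contraction sends $e_S \otimes U^l$ to $\pm e_{S \setminus \{1, \ldots, k\}} \otimes U^l$ when $\{1, \ldots, k\} \subseteq S$, and to zero otherwise, with signs prescribed by the usual wedge–contraction identity. Expanding $x = \sum c_{S, l}\, e_S \otimes U^l$, the hypothesis $(v_1 \wedge \cdots \wedge v_k) \cdot x = 0$ forces $c_{S, l} = 0$ whenever $\{1, \ldots, k\} \subseteq S$.

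To construct $x'$: for each nonzero summand $c_{S, l}\, e_S \otimes U^l$ of $x$, I include in $x'$ the corresponding term $\pm c_{S, l}\, e_{S'} \otimes U^l$, where $S'$ is chosen with $S \subseteq S'$, $\{1, \ldots, k\} \subseteq S'$, and $|S'| = |S| + k$ (so the inclusion of $\{1, \ldots, k\}$ is ``completed''), and the signs match those coming from the contraction rule. The grading check is immediate: $|S'| - 2l = (|S| - 2l) + k = i + k$, so $x' \in M^{\st}_{i+k}$, and by construction $(v_1 \wedge \cdots \wedge v_k) \cdot x' = x$ termwise.

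The delicate point, and where I expect the main obstacle to lie, is ensuring that for every surviving summand a valid index set $S'$ can be produced---equivalently, that each surviving monomial genuinely lies in the image of $v_1 \wedge \cdots \wedge v_k$. The hypothesis cuts down the possible index sets $S$ exactly enough for this to go through, and the rest of the argument is elementary bookkeeping with signs via the standard exterior-algebra identities. Linearity then extends the term-by-term construction to all of $x$.
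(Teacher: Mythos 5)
The step you yourself flag as the ``delicate point'' is exactly where the argument breaks, and it cannot be repaired for the statement as literally printed. Your first deduction is fine: since distinct $S\supseteq\{1,\dots,k\}$ contract to distinct monomials, the hypothesis $(v_1\wedge\cdots\wedge v_k)\cdot x=0$ forces $c_{S,l}=0$ only for those $S$ \emph{containing all of} $\{1,\dots,k\}$. But it does not kill monomials whose index set meets $\{1,\dots,k\}$ in a proper nonempty subset, and for such a monomial no $S'$ of the kind you describe exists: contraction by $v_1\wedge\cdots\wedge v_k$ sends $e_{S'}\otimes U^l$ to $\pm e_{S'\setminus\{1,\dots,k\}}\otimes U^l$, so every monomial in the image has index set disjoint from $\{1,\dots,k\}$, whereas your surviving $e_S$ need not (also $|S\cup\{1,\dots,k\}|<|S|+k$ in that case, so the ``completion'' requirement is already inconsistent). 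Concretely, for $k=2$ the element $x=e_1\otimes 1\in M^{\st}_1$ satisfies $(v_1\wedge v_2)\cdot x=0$ but lies outside the image of $i_{v_1\wedge v_2}$, so the lemma with the weak hypothesis is actually false for $k\ge 2$; the hypothesis does \emph{not} ``cut down the possible index sets exactly enough.''

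What is true, and what the paper actually uses (in the proof of Proposition~\ref{prop:rank} the element $\xi$ lies in $\KK^V(\HFi)$, i.e.\ is annihilated by each element of $V$ separately), is the statement with the stronger hypothesis that $v_j\cdot x=0$ for every $j=1,\dots,k$. Under that hypothesis the same basis computation shows $c_{S,l}=0$ unless $S\cap\{1,\dots,k\}=\emptyset$, and then your termwise construction works verbatim with $S'=S\cup\{1,\dots,k\}$ (correct grading $|S|+k-2l=i+k$, signs from the contraction identity). The paper gives no written proof to compare against (the lemma is stated as ``easy to verify''), so the fix is simply to prove the corrected statement; as written, your proof has a genuine gap at the lifting step.
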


\begin{lemma} \label{lemma:tower}
For any primitive subgroup $V$ of a free abelian group $H$, we have
\[
\QK^V (M^{\st}) \cong \KQ^V (M^{\st}) \cong \Z[U,U^{-1}].
\]
\end{lemma}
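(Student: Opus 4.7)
Since $V$ is primitive, I will extend a basis $v_1, \dots, v_k$ of $V$ to a basis $v_1, \dots, v_n$ of $H$ and set $W = \Span(v_{k+1}, \dots, v_n)$, giving $H = V \oplus W$. The dual splitting $H^* = V^* \oplus W^*$ then yields a natural isomorphism of $\Lambda^* H$--modules
\[
M^{\st} \cong \Lambda^* V^* \otimes_\Z M_W^{\st},
\]
where $M_W^{\st} := \Lambda^* W^* \otimes \Z[U,U^{-1}]$ is the analogous standard module built from $W$. The plan is to apply Lemma~\ref{lemma:directsum} with $V_1 = V$ and $V_2 = 0$ to reduce the problem to separate computations on the two tensor factors; note that $\KK^0(M_W^{\st}) = \QQ^0(M_W^{\st}) = M_W^{\st}$, so the $W$-factor is untouched at this stage.

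The exterior-algebra piece is straightforward. A direct inspection in the dual basis shows that $\KK^V(\Lambda^* V^*) \cong \Z$, concentrated in grading $0$ and generated by $1 \in \Lambda^0 V^*$ (an element $\sum c_J v_J^*$ is killed by every $v_i$ iff $c_J = 0$ for all nonempty $J$), while $\QQ^V(\Lambda^* V^*) \cong \Z$, concentrated in grading $k$ and generated by the top form $v_1^* \wedge \cdots \wedge v_k^*$ (the image of $V$-contraction is exactly the span of those $v_J^*$ with $J \subsetneq \{1,\dots,k\}$). Substituting these into the preceding display identifies both $\KK^V(M^{\st})$ and $\QQ^V(M^{\st})$ with $M_W^{\st}$ (the latter shifted in grading by $k$).

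Finally, the same analysis applied to $M_W^{\st}$ --- again invoking Lemma~\ref{lemma:directsum}, now with the splitting $W \oplus 0$ to peel off the inert factor $\Z[U,U^{-1}]$ --- will give
\[
\QQ^W(M_W^{\st}) \cong \QQ^W(\Lambda^* W^*) \otimes \Z[U,U^{-1}] \cong \Z[U,U^{-1}]
\]
and
\[
\KK^W(M_W^{\st}) \cong \KK^W(\Lambda^* W^*) \otimes \Z[U,U^{-1}] \cong \Z[U,U^{-1}],
\]
with generators in gradings $n-k$ and $0$, respectively. Composing these with the previous step produces both isomorphisms claimed by the lemma.

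The only real subtlety I foresee is grading bookkeeping: the generators of $\QK^V(M^{\st})$ and $\KQ^V(M^{\st})$ end up in different gradings (shifted by $n-k$ and by $k$ respectively, relative to the generator of $\Z[U,U^{-1}]$ in grading $0$). That asymmetry will matter for the later definitions of intermediate $d$-invariants, but it does not affect the bare module-isomorphism statement of the lemma.
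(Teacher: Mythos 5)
Your proof is correct; the paper offers no argument for this lemma (it is stated as ``easy to verify''), and your computation via the splitting $H = V \oplus W$, Lemma~\ref{lemma:directsum}, and the explicit contraction kernel/cokernel of $\Lambda^* V^*$ is exactly the intended verification, including the grading bookkeeping (generator of $\QK^V(M^{\st})$ in degree $n-k$, of $\KQ^V(M^{\st})$ in degree $k$). The one step you leave implicit---that applying $\QQ = \QQ^H$ to $\KK^V(M^{\st}) \cong M_W^{\st}$ is the same as applying $\QQ^W$, and dually that $\KK = \KK^H$ on $\QQ^V(M^{\st})$ is $\KK^W$---is immediate because $V$ acts trivially on $\KK^V(M^{\st})$ and on $\QQ^V(M^{\st})$, so the $\Lambda^* H$--action there factors through $\Lambda^*(H/V) \cong \Lambda^* W$, and the isomorphisms from Lemma~\ref{lemma:directsum} respect this action.
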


\section{Correction terms for manifolds with standard \texorpdfstring{$\HFi$}{HF-infinity}} \label{S:correction}

Before defining the correction terms, we recall a few facts about the invariant $\HFi$. (We assume that the reader is familiar with the definition of Heegaard Floer homology \cite{oz:hf}.)

Let $Y$ be a closed, oriented $3$-manifold. We write $H_1^T(Y)$ for $H_1(Y;\Z)/\Tors$. Note that $H_1^T(Y)$ and $H^1(Y)$ are canonically dual to one another.

\begin{definition} \label{def:standardHFi}
Let $Y$ be a closed, oriented $3$-manifold. We say that $\HFi(Y)$ is \emph{standard} if for each torsion spin$^c$ structure $\spincs$ on $Y$, we have
\begin{equation} \label{eq:standardHFi}
\HFi(Y,\spincs) \cong \Lambda^* H^1(Y;\Z) \otimes \Z[U,U^{-1}]
\end{equation}
as a relatively graded $\Lambda^*(H_1^T/\Tors) \otimes \Z[U]$--module.
\end{definition}

The motivation for this definition is that for any $Y$, there is a spectral sequence whose $E^2$ term is the right-hand side of \eqref{eq:standardHFi} and which converges to $\HFi(Y, \spincs)$. If this spectral sequence collapses at the $E^2$ page, then the isomorphism \eqref{eq:standardHFi} holds, at least on the level of groups. Ozsv\'ath and Szab\'o conjectured that the only possibly nontrivial differential in the spectral sequence is $d^3$ and gave a conjectural formula for this differential in terms of the triple cup product form
\[
H^1(Y;\Z) \otimes H^1(Y;\Z) \otimes H^1(Y;\Z) \to \Z
\]
given by
\[
\alpha \otimes \beta \otimes \gamma \mapsto \gen{\alpha \cup \beta \cup \gamma, [Y]}.
\]
(See also \cite{mark:triple}.)

Lidman \cite{lidman:hfinfinity} recently proved that a certain variant of $\HFi$ (with coefficients in $\Z/2\Z$, completed with respect to $U$) is completely determined by the triple cup product form on $Y$. When this form vanishes, a modified version of his proof goes through for ordinary $\HFi$:

\begin{theorem} \label{thm:Lidman}
If $Y$ is a closed oriented $3$-manifold such that the triple cup product form vanishes identically, then $\HFi(Y)$ is standard.
\end{theorem}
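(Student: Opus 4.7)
The plan is to follow the strategy of Lidman \cite{lidman:hfinfinity} --- who treated the case of $\Z/2$-coefficients and $U$-completion --- and to observe that these technical hypotheses become unnecessary under the stronger assumption that the triple cup product vanishes identically.

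First, I would work with the Ozsv\'ath--Szab\'o spectral sequence mentioned after Definition~\ref{def:standardHFi}: its $E^2$-page is $\Lambda^* H^1(Y;\Z) \otimes \Z[U,U^{-1}]$, viewed as a $\Lambda^*(H_1^T(Y)) \otimes \Z[U]$-module, and it converges to $\HFi(Y,\spincs)$ for any torsion spin$^c$ structure $\spincs$. The construction is natural with respect to coefficient change, and the $E^2$-page already has precisely the form required by Definition~\ref{def:standardHFi}. So it suffices to prove two things: the spectral sequence collapses at $E^2$, and there are no nontrivial extensions in passing from $E^\infty$ to $\HFi(Y,\spincs)$.

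For the collapse, Lidman's identification of the $d^3$ differential with the map induced by the triple cup product immediately gives $d^3 = 0$ under our hypothesis. Higher differentials are governed by obstructions built from iterated cup products (higher Massey-type operations), and all of these are forced to vanish when the ordinary triple cup product does, by a formality argument on the cohomology ring. Lidman's use of $\Z/2$-coefficients and $U$-completion is needed to control the algebra when the triple cup product is nonzero; in our vanishing case the argument simplifies and works over $\Z$ directly. For the extension problem, I would combine the collapsed spectral sequence with Lidman's theorem applied with $\Z/p$-coefficients for each prime $p$ (which, under our vanishing hypothesis, yields the standard answer with mod-$p$ coefficients) to conclude that $\HFi(Y,\spincs)$ is torsion-free and of the expected rank in each grading, hence isomorphic to $E^\infty$ as a graded module with the correct $\Lambda^*(H_1^T(Y)) \otimes \Z[U]$-action.

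The main obstacle I expect is verifying that Lidman's arguments, originally carried out in characteristic $2$ and with a completed $U$-action, admit a modified form in which these assumptions can be dropped when the triple cup product vanishes. This is fundamentally an algebraic check at the chain level, tracing through how the completion and mod-$2$ reduction enter his proof and confirming that each such step is needed only to control nontrivial triple cup product contributions. If any subtle torsion or extension issues persist after the collapse is established, the comparison with mod-$p$ calculations in the final step serves as a backstop to rule them out.
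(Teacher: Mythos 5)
Your approach has a genuine gap at its core. You treat the identification of the $d^3$ differential of the $\Lambda^* H^1(Y;\Z)\otimes\Z[U,U^{-1}]$ spectral sequence with the triple cup product as an established theorem of Lidman, but it is not: as the paper itself notes, Ozsv\'ath and Szab\'o only \emph{conjectured} that $d^3$ is the sole possibly nontrivial differential and gave a \emph{conjectural} formula for it, and Lidman's theorem is not proved by computing differentials in this spectral sequence at all --- it is proved by surgery-presentation and cobordism-map arguments for the completed mod-$2$ theory. Consequently your next step, that all higher differentials vanish ``by a formality argument on the cohomology ring,'' has no basis in the literature and is not something one can wave through; controlling $d^r$ for $r>3$ is precisely the open part of the Ozsv\'ath--Szab\'o conjecture. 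Finally, even if collapse at $E^2$ were granted, the paper points out that this only yields the isomorphism \eqref{eq:standardHFi} at the level of groups, whereas Definition~\ref{def:standardHFi} demands an isomorphism of relatively graded $\Lambda^*(H_1^T(Y))\otimes\Z[U]$--modules; your mod-$p$ comparison can pin down ranks and torsion-freeness but says nothing about the $H_1$-action, so the extension/module-structure problem remains unresolved.

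The paper's proof sidesteps all of this. It imports from Lidman only the topological input (his Proposition 2.4 and Theorem 3.1): after connect-summing with lens spaces $L(m_i,1)$, the manifold $Z = Y \conn L(m_1,1)\conn\cdots\conn L(m_k,1)$ is integer surgery on a link of nullhomologous knots with nonzero framings and vanishing pairwise linking numbers in $\conn^n S^1\times S^2$. One then inducts over the two-handle cobordisms $W_i$: each is definite, and \cite[Proposition 9.4]{oz:boundary} (applied to $W_i$ or to $-W_i$) shows $F^\infty_{W_i,\spincs}$ is an isomorphism of $\Lambda^*(H_1/\Tors)\otimes\Z[U]$--modules, so standardness propagates, including the module structure; the connected sum formula then recovers standardness of $\HFi(Y)$. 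If you want to salvage your outline, you would have to either prove the differential and extension claims from scratch or reroute through this surgery argument.
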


\begin{proof}
Let $n = b_1(Y)$. According to \cite[Proposition 2.4 and Theorem 3.1]{lidman:hfinfinity}, we may find nonzero integers $m_1, \dots, m_k$ such that $Z = Y \conn L(m_1, 1) \conn \cdots \conn L(m_k, 1)$ is presented by integer surgery on a framed link $\L = \K_1 \cup \dots \cup \K_n \subset \conn^n S^1 \times S^2$, where each of the framed knots $\K_i$ is nulhomologous and has nonzero framing, and the pairwise linking numbers of $\L$ are all zero. Let $Z_0 = \conn^n S^1 \times S^2$, and for $i=1, \dots, n$, let $Z_i = Z_0(\K_1, \dots, \K_i)$ and let $W_i$ be the two-handle cobordism from $Z_{i-1}$ to $Z_i$. We inductively show that $\HFi(Z_i)$ is standard. The base case $i=0$ is clear. For the induction step, assume that $\HFi(Z_{i-1})$ is standard. If $W_i$ is negative-definite, then \cite[Proposition 9.4]{oz:boundary} says for any any spin$^c$ structure $\spincs$ on $W_i$ whose restriction to both ends is torsion, the induced map $F^\infty_{W_i,\spincs} \co \HFi(Z_{i-1}, \spincs|_{Z_{i-1}}) \to \HFi(Z_i, \spincs|_{Z_i})$ is an isomorphism of $\Lambda^*(H_1/\Tors) \otimes \Z[U]$--modules, implying that $\HFi(Z_i)$ is standard. If $W_i$ is positive-definite, then $F^\infty_{-W_i,\spincs} \co \HFi(-Z_{i-1}, \spincs|_{Z_{i-1}}) \to \HFi(-Z_i, \spincs|_{Z_i})$ is an isomorphism. An oriented manifold $M$ has standard $\HFi$ if and only if $-M$ does, so $\HFi(Z_i)$ must be standard in this case as well, concluding the induction. Thus, $\HFi(Z)$ is standard, and the connected sum formula \cite[Theorem 6.2] {oz:hf-properties} then implies that $\HFi(Y)$ is standard as well.
\end{proof}

Suppose $\HFi(Y)$ is standard. Roughly speaking, we would like to define a $d$ invariant for each tower in $\HFi(Y,\s)$ --- i.e., the subspace $\lambda \otimes \Z[U,U^{-1}]$, where $\lambda$ is a homogeneous element of $\Lambda^* H^1(Y)$ --- as the minimal grading of nonzero elements in the image of that tower in $\HFp(Y,\spincs)$. The most important of these invariants are the ones associated to generators of the bottom and top exterior powers $\Lambda^0 H^1(Y)$ and $\Lambda^{b_1(Y)} H^1(Y)$. The problem with this approach is that the identification \eqref{eq:standardHFi} is not canonical (to the authors' knowledge), so $\lambda$ does not determine a subspace of $\HFi(Y,\spincs)$ in an invariant way. Instead, we may avoid the naturality issue by making use of the $H_1$ action to define invariants associated to every subspace of $H_1(Y)/\Tors$. For technical reasons, we actually need to define two rational numbers associated to each subspace, denoted $d(Y, \spincs, V)$ and $d^*(Y, \spincs, V)$. The $4$-dimensional applications that we describe below (Corollaries \ref{cor:QHS1xB3} and \ref{cor:intform}) only depend on knowing $d(Y, \spincs, V)$ for each $V$, but we need to use both collections of invariants to prove some of the key properties (Propositions \ref{prop:duality}, \ref{prop:additivity}, and \ref{prop:QHcob}) used in establishing these facts.

The short exact sequence
\[
0 \to \CFm(Y, \spincs) \xrightarrow{\iota} \CFi(Y, \spincs) \xrightarrow{\pi} \CFp(Y, \spincs) \to 0
\]
gives rise to a long exact sequence
\[
\cdots \to \HFm(Y,\spincs) \xrightarrow{\iota_*} \HFi(Y,\spincs) \xrightarrow{\pi_*} \HFp(Y,\spincs) \xrightarrow{\delta} \HFm(Y,\spincs) \to \cdots,
\]
which we denote by $\operatorname{HF}^\circ(Y,\spincs)$. Let $I^-(Y, \spincs) = \coker \delta = \im \iota_*$, and $I^+(Y, \spincs) = \ker \delta = \im \pi_*$, so that we have a short exact sequence
\begin{equation} \label{eq:Iexact}
0 \to I^-(Y, \spincs) \xrightarrow{\iota_*} \HFi(Y, \spincs) \xrightarrow{\pi_*} I^+(Y, \spincs) \to 0.
\end{equation}
Denote this short exact sequence by $I^\circ(Y, \spincs)$.

Since the functor $\KK^V$ is left-exact, we have an exact sequence
\begin{equation} \label{eq:Kleftexact}
0 \to \KK^V (I^-(Y, \spincs)) \xrightarrow{ \KK^V(\iota_*)} \KK^V (\HF^\infty(Y, \spincs)) \xrightarrow{ \KK^V(\pi_*)} \KK^V( I^+(Y,\spincs)),
\end{equation}
and hence, setting $J^+(Y, \spincs, V) = \im (\KK^V(\pi_*))$, a short exact sequence of modules over $\Lambda^* ((H_1^T(Y)/V)/\Tors)$,
\begin{equation} \label{eq:Kexact}
0 \to \KK^V (I^-(Y, \spincs)) \xrightarrow{ \KK^V(\iota_*)} \KK^V (\HF^\infty(Y, \spincs)) \xrightarrow{ \KK^V(\pi_*)} J^+(Y,\spincs,V) \to 0.
\end{equation}
Applying $\QQ$ to this sequence, we define $d(Y,\spincs,V)$ to be the minimal grading of any nontorsion element\footnote{Here and throughout, ``nontorsion'' simply means nontorsion as an element of an abelian group, without regard to the $\Lambda^* \otimes \Z[U]$--module structure.} in the image of
\begin{equation} \label{eq:QKpi}
\QK^V(\pi_*) \co \QK^V (\HF^\infty(Y, \spincs)) \to \QQ (J^+(Y,\spincs,V)).
\end{equation}

Similarly, if we apply $\QQ^V$ to $I^\circ$, we get an exact sequence
\begin{equation} \label{eq:Qrightexact}
\QQ^V (I^-(Y, \spincs)) \xrightarrow{ \QQ^V(\iota_*)} \QQ^V (\HF^\infty(Y, \spincs)) \xrightarrow{ \QQ^V(\pi_*)} \QQ^V( I^+(Y,\spincs)) \to 0,
\end{equation}
and hence, setting $J^-(Y, \spincs, V) = \im (\QQ^V(\iota_*))$, a short exact sequence
\begin{equation} \label{eq:Qexact}
0 \to J^-(Y, \spincs,V) \xrightarrow{ \QQ^V(\iota_*)} \QQ^V (\HF^\infty(Y, \spincs)) \xrightarrow{ \QQ^V(\pi_*)} \QQ^V(I^+(Y,\spincs,V)) \to 0.
\end{equation}
We define $d^*(Y,\spincs,V)$ to be the minimal grading of any nontorsion element in the image of
\begin{equation} \label{eq:KQpi}
\KQ^V(\pi_*) \co \KQ^V (\HF^\infty(Y, \spincs)) \to \KQ^V (I^+(Y,\spincs)).
\end{equation}

Finally, we define the \emph{bottom} and \emph{top correction terms} of $(Y, \spincs)$ to be
\[
\dbot(Y,\spincs) = d(Y, \spincs, H_1^T(Y)) = d^*(Y, \spincs, \{0\})
\]
and
\[
\dtop(Y,\spincs) = d(Y, \spincs, \{0\}) = d^*(Y, \spincs, H_1^T(Y)).
\]
The correction terms associated to subspaces of rank not equal to $0$ or $b_1(Y)$ are called the \emph{intermediate correction terms}.

We now prove some basic algebraic properties of the correction terms. To begin, the following lemma implies that it suffices to consider the correction terms associated to primitive subgroups of $H_1^T(Y)$.

\begin{lemma} \label{lemma:samerank}
If $V$ and $V'$ are subgroups of $H_1^T(Y)$ of the same rank such that $V' \subset V$, then $d(Y, \spincs, V) = d(Y, \spincs, V')$ and $d^*(Y, \spincs, V) = d^*(Y, \spincs, V')$.
\end{lemma}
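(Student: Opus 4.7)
The plan is to tensor everything with $\Q$ and exploit the fact that $V$ and $V'$ have the same rational span in $H_1^T(Y)\otimes\Q$ (since $V'\subset V$ and they have equal rank), so that all the relevant kernels, quotients, and images agree after rationalization.

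First, I would observe that both functors $\KK^V$ and $\QQ^V$ commute with $-\otimes_\Z\Q$: the former because $\KK^V(M)=\ker(M\to\hom_\Z(V,M))$ is the kernel of a $\Z$-module map and $\Q$ is $\Z$-flat, and the latter because $\QQ^V(M)=M/\II^V M$ is a cokernel. Inside $\Lambda\otimes\Q$, the ideals $\II^V\otimes\Q$ and $\II^{V'}\otimes\Q$ both coincide with the ideal generated by $V\otimes\Q=V'\otimes\Q$, so $\KK^V(M)\otimes\Q=\KK^{V'}(M)\otimes\Q$ and $\QQ^V(M)\otimes\Q=\QQ^{V'}(M)\otimes\Q$ for every $\Lambda$-module $M$. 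Composing these identifications yields $\QK^V(M)\otimes\Q=\QK^{V'}(M)\otimes\Q$ and $\KQ^V(M)\otimes\Q=\KQ^{V'}(M)\otimes\Q$.

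Applying these identifications with $M=\HFi(Y,\spincs)$ and $M=I^+(Y,\spincs)$, the submodules $J^{\pm}(Y,\spincs,V)$ from~\eqref{eq:Kexact} and~\eqref{eq:Qexact} agree with their $V'$ counterparts after $-\otimes\Q$, and the maps $\QK^V(\pi_*)$ and $\KQ^V(\pi_*)$ likewise coincide rationally with the $V'$ versions. In particular their images agree in the rationalized targets. Since an element of a graded abelian group is nontorsion precisely when it remains nonzero after $-\otimes\Q$, the minimum grading of a nontorsion element in the image is the same for $V$ and for $V'$, yielding both $d(Y,\spincs,V)=d(Y,\spincs,V')$ and $d^*(Y,\spincs,V)=d^*(Y,\spincs,V')$.

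The main step requiring care is the identification $\II^V\otimes\Q=\II^{V'}\otimes\Q$ inside $\Lambda\otimes\Q$, which uses only that $V$ and $V'$ have the same rational span; the remainder is routine flatness bookkeeping, so I anticipate no significant obstacle.
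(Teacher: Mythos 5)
Your proof is correct, and it reaches the lemma by a route that is cleaner and more uniform than the one in the paper, though the underlying reason is the same. The paper argues integrally: it fixes $n$ with $nV \subset V'$, observes that $\KK^V(\HFi(Y,\spincs)) = \KK^{V'}(\HFi(Y,\spincs))$ because $\HFi$ is torsion-free, and then runs an element-level argument with integer multiples to show that a class realizing $d(Y,\spincs,V)$ cannot map to a torsion class in the $V'$-quotient and conversely; the $d^*$ statement is left as an exercise. Your rationalization argument automates exactly those multiplications: flatness of $\Q$ plus $V \otimes \Q = V' \otimes \Q$ makes the maps \eqref{eq:QKpi} and \eqref{eq:KQpi} for $V$ and $V'$ literally coincide after $-\otimes\Q$, and since ``the image contains a nontorsion element in grading $r$'' is equivalent to nonvanishing of the rationalized image in grading $r$, both equalities follow simultaneously. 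What your approach buys is symmetry between $d$ and $d^*$ and a transparent explanation of the remark after the lemma that $d(Y,\spincs,\cdot)$ descends to the Grassmannian of $H_1(Y;\Q)$; what the paper's hands-on version buys is staying integral and rehearsing the multiplication-by-$n$ manipulation that reappears in the proof of Proposition~\ref{prop:rank}. In a final write-up you should make explicit the two pieces you label as bookkeeping: that the rationalization of an image equals the image of the rationalized map (so $J^+(Y,\spincs,V)\otimes\Q = J^+(Y,\spincs,V')\otimes\Q$ inside $\KK^{V'}(I^+(Y,\spincs))\otimes\Q$), and that the natural comparison maps (the inclusion $\KK^V \subset \KK^{V'}$ and the surjection $\QQ^{V'} \twoheadrightarrow \QQ^V$) intertwine the $V$ and $V'$ versions of $\pi_*$, so the identifications are compatible with the maps whose images define the invariants; both are indeed routine.
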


\begin{proof}
Since $V / V'$ is torsion, there exists an $n$ such that for any $v \in V$, $nv \in V'$.

Since $\HFi(Y,\spincs)$ is torsion-free, $\KK^V (\HFi(Y, \spincs)) = \KK^{V'} (\HFi(Y,\spincs))$, while $\KK^V I^+(Y, \spincs) \subset \KK^{V'} I^+(Y, \spincs)$.

Suppose $\xi \in \KK^V (\HFi(Y,\spincs))$ is in grading $d(Y, \spincs, V)$, and the class of $\xi$ in $\QK^V (\HFi(Y, \spincs))$ maps to a nontorsion element of $\QQ (J^+(Y, \spincs, V))$. We claim that no nonzero multiple of $\pi_*(\xi)$ is in the image of the action of $H_1^T(Y) / V'$. Specifically, suppose that
\[
\pi(m \xi) = \sum_{i=1}^k w_i \cdot \eta_i,
\]
where $w_1, \dots, w_k$ are elements of $H_1^T(Y)$ that are linearly independent in $H_1^T(Y) / V'$, hence in $H_1^T(Y)/V$, and $\eta_1, \dots, \eta_k \in J^+(Y, \spincs, V')$. Since $n \eta_i \in J^+(Y, \spincs, V)$, we see that $\pi(m n \xi)$ is in the image of the action of $H_1^T(Y) / V$ on $J^+(Y, \spincs, V)$, a contradiction. Thus, $d(Y, \spincs, V') \le d(Y, \spincs, V)$. A similar but shorter argument shows that $d(Y, \spincs, V) \le d(Y, \spincs, V')$.

The statement for $d^*$ is left to the reader as an exercise.
\end{proof}

Viewed another way, Lemma \ref{lemma:samerank} says that $d(Y, \spincs, V)$ is really an invariant of the subspace $V \otimes \Q \subset H_1(Y;\Q)$, meaning that $d(Y, \spincs, \cdot)$ is really a function on the Grassmannian of $H_1(Y;\Q)$.

Next, we consider nested subspaces of different ranks.

\begin{proposition} \label{prop:rank}
If $V$ and $V'$ are subspaces of $H_1^T(Y)$ of ranks $k$ and $l$ respectively, and $V' \subset V$, then
\[
d(Y, \spincs, V) \ge d(Y, \spincs, V') - \rank(V/V')
\]
and
\[
d^*(Y, \spincs, V) \le d^*(Y, \spincs, V') + \rank(V/V').
\]
In particular,
\[
\dbot(Y, \spincs) \ge \dtop(Y,\spincs) - b_1(Y).
\]
Moreover, in each of these inequalities, the two sides are congruent modulo $2\Z$.
\end{proposition}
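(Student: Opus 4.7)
The plan is to reduce to primitive subgroups via Lemma~\ref{lemma:samerank}, then exploit the standard form of $\HFi(Y,\spincs)$ to realize each bound by an explicit element in the $M^{\st}$ model, obtained by wedging the element realizing the other side with a dual basis vector. Using Lemma~\ref{lemma:samerank}, I may assume $V$ and $V'$ are primitive. Choose a basis $v_1, \dots, v_n$ of $H_1^T(Y)$ (where $n = b_1(Y)$) so that $v_1, \dots, v_l$ is a basis of $V'$ and $v_1, \dots, v_k$ is a basis of $V$, let $v_1^*, \dots, v_n^*$ denote the dual basis, and fix a relatively graded $\Lambda \otimes \Z[U]$--module isomorphism $\HFi(Y, \spincs) \cong M^{\st}$; all subsequent constructions take place in this model.

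For the first inequality, choose $\xi \in \KK^V(\HFi)$ of grading $d(Y, \spincs, V)$ such that $[\pi_*(\xi)]$ is nontorsion in $\QQ(J^+(Y, \spincs, V))$, and set
\[
\eta' = (v_{l+1}^* \wedge \dots \wedge v_k^*) \wedge \xi.
\]
Since $\xi$ is supported on basis elements $v_S^* \otimes U^a$ with $S \cap \{1, \dots, k\} = \emptyset$, the element $\eta'$ is supported on such elements with $S \cap \{1, \dots, l\} = \emptyset$, hence lies in $\KK^{V'}(\HFi)$ with grading $d(Y, \spincs, V) + (k-l)$; a Leibniz computation gives $(v_{l+1} \wedge \dots \wedge v_k) \cdot \eta' = \pm \xi$. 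To show that $[\pi_*(\eta')]$ is nontorsion in $\QQ(J^+(Y, \spincs, V'))$, I argue by contrapositive: if $N \pi_*(\eta') = \sum_i w_i \pi_*(\tilde\alpha_i)$ with $w_i \in H_1^T(Y)$ and $\tilde\alpha_i \in \KK^{V'}(\HFi)$, then applying $(v_{l+1} \wedge \dots \wedge v_k)$—using graded commutativity to move it past the $w_i$'s and the fact that $(v_{l+1} \wedge \dots \wedge v_k) \cdot \tilde\alpha_i \in \KK^V(\HFi)$ (because each $v_j$ squares to zero in $\Lambda$)—yields $\pm N \pi_*(\xi) \in \II^H \cdot J^+(Y, \spincs, V)$, contradicting the choice of $\xi$. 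The $d^*$ inequality runs in parallel: given $\xi$ with $[\xi] \in \KQ^{V'}(\HFi)$ mapping to a nontorsion class of grading $d^*(Y, \spincs, V')$ in $\KQ^{V'}(I^+)$, the same $\eta'$ represents a class in $\KQ^V(\HFi)$ of grading $d^*(Y, \spincs, V') + (k - l)$; the key verifications are that $v_i \cdot \eta' \in \II^{V'}\HFi \subset \II^V\HFi$ for each $i > k$ (via Leibniz and $v_i \cdot \xi \in \II^{V'}\HFi$), and that if $N \pi_*(\eta') \in \II^V I^+ = \II^{V'} I^+ + \II^{V''} I^+$ with $V'' = \mathrm{span}(v_{l+1}, \dots, v_k)$, then contracting with $(v_{l+1} \wedge \dots \wedge v_k)$ annihilates the $\II^{V''}$ contribution (because $v_j \wedge v_{l+1} \wedge \dots \wedge v_k = 0$ for $j \in V''$) and returns the $\II^{V'}$ contribution to $\II^{V'} I^+$, forcing $N \pi_*(\xi) \in \II^{V'} I^+$ and contradicting nontorsionness.

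For the congruences modulo $2\Z$: Lemma~\ref{lemma:tower} gives $\QK^V(M^{\st}) \cong \KQ^V(M^{\st}) \cong \Z[U, U^{-1}]$, and a direct grading calculation shows $\QK^V(M^{\st})$ is supported in gradings of parity $b_1(Y) - \rank V$, while $\KQ^V(M^{\st})$ is supported in parity $\rank V$ (mod~$2$). Transporting through the iso $\HFi(Y, \spincs) \cong M^{\st}$ (with some absolute grading shift $c$ depending on $(Y, \spincs)$) yields $d(Y, \spincs, V) \equiv c + b_1(Y) - \rank V$ and $d^*(Y, \spincs, V) \equiv c + \rank V \pmod{2}$, from which both congruence statements follow by inspection. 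The main obstacle I anticipate is the sign and ideal bookkeeping in the two contrapositive nontorsion steps—especially in the $d^*$ case, where the decomposition $\II^V = \II^{V'} + \II^{V''}$, the vanishing $v_j \wedge v_{l+1} \wedge \dots \wedge v_k = 0$ for $j \in V''$, and the Koszul-type signs arising from commuting contractions past wedges must all be tracked consistently to push the argument through.
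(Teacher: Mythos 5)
Your argument is correct and is essentially the paper's: you take the extremal element for one subspace, lift it through the standard module structure (your explicit wedge $v_{l+1}^*\wedge\cdots\wedge v_k^*\wedge\xi$ in the $M^{\st}$ model plays the role of the lift provided by Lemma~\ref{lemma:lift}), and then push any torsion relation back by acting with $v_{l+1}\wedge\cdots\wedge v_k$, which is exactly the paper's contradiction step. The remaining differences are cosmetic: you handle all of $V/V'$ in one step rather than inducting one rank at a time, you prove the $d^*$ inequality directly where the paper says ``similarly'' or defers to the duality proposition, and you extract the mod-$2$ congruences from the parities of the gradings supporting $\QK^V(M^{\st})$ and $\KQ^V(M^{\st})$ rather than from Lemma~\ref{lemma:tower}.
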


\begin{proof}
It suffices to prove the result when $\rank V = \rank V' + 1$; the general result then follows by induction. Furthermore, by Lemma \ref{lemma:samerank}, we may assume that $V/V' \cong \Z$. Let $k = \rank V$; we may find a basis $v_1, \dots, v_k$ for $V$ such that $v_1, \dots, v_{k-1}$ is a basis for $V'$.

Let $\xi \in \KK^V_d (\HFi(Y,\spincs))$, where $d = d(Y, \spincs, V)$, such that $\pi_*(\xi)$ represents a nontorsion class in $\QQ(J^+(Y, \spincs, V))$. There exists an element $\eta \in \HFi_{d+k}(Y, \spincs)$ such that $(v_1 \wedge \cdots \wedge v_k) \cdot \eta = \xi$. Let $\zeta = (v_1 \wedge \cdots \wedge v_{k-1}) \cdot \eta$; note that $\zeta \in \KK^{V'}_{d+1}(\HFi(Y, \spincs))$.

We claim that $\pi_*(\zeta) \in \KK^{V'} (I^+(Y, \spincs)$ represents a nontorsion class in $\QQ(J^+(Y, \spincs,V'))$, which will imply (by Lemma \ref{lemma:tower}) that
\[
d(Y, \spincs, V') = \gr(\zeta) - 2j
\]
for some nonnegative integer $j$, as required. Suppose, toward a contradiction, that
\[
n \pi_*(\zeta) = \sum_{i=1}^m w_i \cdot \pi_*(\theta_i).
\]
for some $w_1, \dots, w_m \in H_1^T(Y) \minus V'$ and some $\theta_1, \dots, \theta_m \in \KK^{V'}_{d+2}( \HFi(Y, \spincs))$ (meaning that $\pi_*(\theta_i) \in J^+(Y,\spincs,V')$). Then
\[
\sum_{i=1}^m w_i \cdot (-v_k \cdot \pi_*(\theta_i)) = v_k \cdot n \pi_*(\zeta) = n \pi_*(\xi).
\]
Since $-v_k \cdot \pi_*(\theta_i) \in J^+(Y,\spincs,V')$, this contradicts the fact that $\pi_*(\xi)$ represents a nontorsion class in $\QQ(J^+(Y,\spincs,V))$.

The proof for $d^*$ proceeds similarly; the statement also follows from Proposition \ref{prop:duality}, below.
\end{proof}

\begin{corollary} \label{cor:simple}
If $\dbot(Y, \spincs) = \dtop(Y, \spincs) - b_1(Y)$, then for any subgroup $V \subset H_1^T(Y)$, we have
\[
d(Y, \spincs, V) = \dtop(Y, \spincs) - \rank(V).
\]
and
\[
d^*(Y, \spincs, V) = \dbot(Y, \spincs) + \rank(V).
\]
(We say that the correction terms of $(Y,\spincs)$ are \emph{simple} in this case.)
\end{corollary}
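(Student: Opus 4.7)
The plan is to deduce this corollary as a direct sandwich argument from Proposition \ref{prop:rank}, using the fact that the extreme cases $V = \{0\}$ and $V = H_1^T(Y)$ recover $\dtop$ and $\dbot$ by definition.

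Fix a subgroup $V \subset H_1^T(Y)$ of rank $k$, so that $\{0\} \subset V \subset H_1^T(Y)$ is a chain of nested subgroups of ranks $0$, $k$, and $b_1(Y)$. I would apply Proposition \ref{prop:rank} to the first nesting $\{0\} \subset V$ to get
\[
d(Y, \spincs, V) \ge d(Y, \spincs, \{0\}) - k = \dtop(Y, \spincs) - k,
\]
and to the second nesting $V \subset H_1^T(Y)$ to get
\[
d(Y, \spincs, H_1^T(Y)) \ge d(Y, \spincs, V) - (b_1(Y) - k),
\]
i.e.\ $d(Y, \spincs, V) \le \dbot(Y, \spincs) + b_1(Y) - k$. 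The hypothesis $\dbot = \dtop - b_1(Y)$ turns the upper bound into $\dtop(Y, \spincs) - k$, matching the lower bound, so $d(Y, \spincs, V) = \dtop(Y, \spincs) - k$.

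The argument for $d^*$ is the mirror image, using the opposite inequalities in Proposition \ref{prop:rank}: apply it to $\{0\} \subset V$ to get $d^*(Y, \spincs, V) \le \dbot(Y, \spincs) + k$, and to $V \subset H_1^T(Y)$ to get $\dtop(Y, \spincs) \le d^*(Y, \spincs, V) + (b_1(Y) - k)$. Using the hypothesis, the lower bound on $d^*(Y, \spincs, V)$ becomes $\dbot(Y, \spincs) + k$, which agrees with the upper bound.

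There is no real obstacle here — the only thing to be slightly careful about is that Proposition \ref{prop:rank} is stated for $V' \subset V$, so one must not confuse which inclusion gives which direction of the inequality, and one must note that $V$ might not be primitive. The latter is not an issue because Lemma \ref{lemma:samerank} lets us replace $V$ by a primitive subgroup of the same rank without changing either $d$ or $d^*$, and the hypothesis $\dbot = \dtop - b_1(Y)$ makes both the upper and lower bounds coincide exactly.
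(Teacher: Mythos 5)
Your proof is correct and is exactly the paper's argument: the paper's proof consists of the single line ``Apply Proposition~\ref{prop:rank} to the inclusions $\{0\} \subset V$ and $V \subset H_1^T(Y)$,'' which is precisely the sandwich you carry out in detail (your aside about primitivity is unnecessary, since Proposition~\ref{prop:rank} is stated for arbitrary subgroups, but it does no harm).
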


\begin{proof}
Apply Proposition \ref{prop:rank} to the inclusions $\{0\} \subset V$ and $V \subset H_1^T(Y)$.
\end{proof}

Just as in the rational homology sphere case, the correction terms may also be formulated in terms of the map $\iota_* \co \HFm(Y,\spincs) \to \HFi(Y,\spincs)$. Let $d^-(Y,\spincs,V)$ be the maximal grading of an element of $\QK^V I^-(Y, \spincs)$ whose image in $\QK^V \HFi(Y, \spincs)$ is nontrivial, and let $d^{*-}(Y, \spincs, V)$ be the maximal grading of an element of $\KK J^-(Y,\spincs,V)$ whose image in $\KQ^V \HFi(Y, \spincs)$ is nontorsion (see \eqref{eq:Qexact}). The exact sequences obtained by applying $\QQ$ to \eqref{eq:Kexact} and $\KK$ to \eqref{eq:Qexact}, together with Lemma \ref{lemma:tower}, imply that $d^-(Y,\spincs,V) = d(Y,\spincs,V) -2$ and $d^{*-}(Y,\spincs,V) = d^*(Y, \spincs, V) -2$. We shall make use of these reformulations below.

Before moving on to further properties of the correction terms, we consider two examples.

\begin{figure}
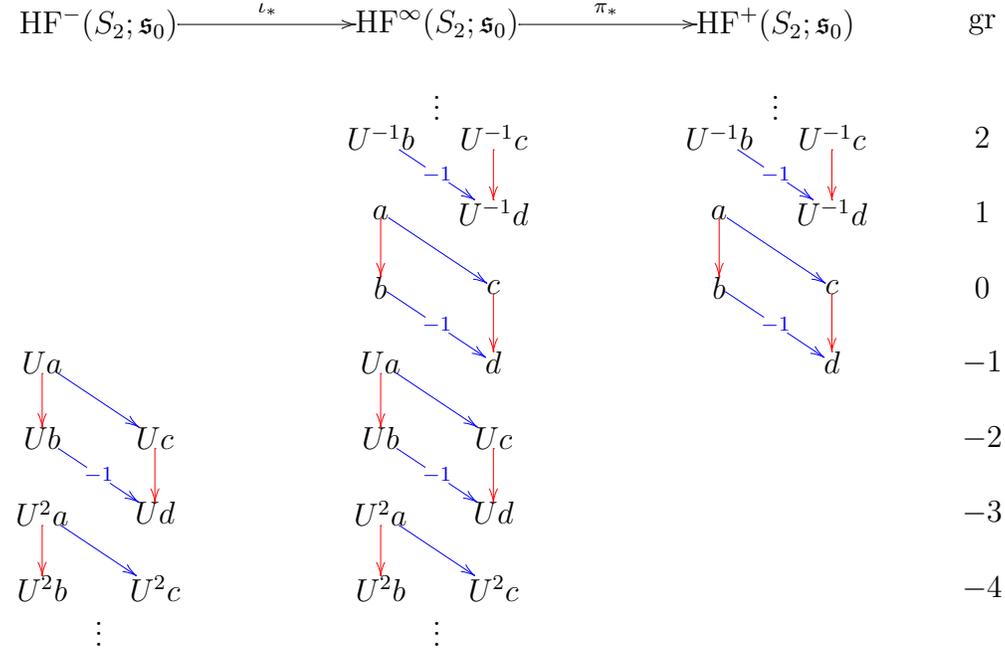

\[
\xy
(0,0)*{Ua}="Ua-"; (0,-10)*{Ub}="Ub-"; (15,-10)*{Uc}="Uc-"; (15,-20)*{Ud}="Ud-";
(0,-20)*{U^2a}="U2a-"; (0,-30)*{U^2b}="U2b-"; (15,-30)*{U^2c}="U2c-"; 
(7.5,-35)*{\vdots};
(7.5,45)*{\HFm(S_2;\spincs_0)}="HFm";
(52.5,35)*{\vdots};
(45,30)*{U^{-1}b}="U-1b"; (60,30)*{U^{-1}c}="U-1c"; (60,20)*{U^{-1}d}="U-1d";
(45,20)*{a}="a"; (45,10)*{b}="b"; (60,10)*{c}="c"; (60,0)*{d}="d";
(45,0)*{Ua}="Ua"; (45,-10)*{Ub}="Ub"; (60,-10)*{Uc}="Uc"; (60,-20)*{Ud}="Ud";
(45,-20)*{U^2a}="U2a"; (45,-30)*{U^2b}="U2b"; (60,-30)*{U^2c}="U2c"; 
(52.5,-35)*{\vdots};
(52.5,45)*{\HFi(S_2;\spincs_0)}="HFi";
(97.5,35)*{\vdots};
(90,30)*{U^{-1}b}="U-1b+"; (105,30)*{U^{-1}c}="U-1c+"; (105,20)*{U^{-1}d}="U-1d+";
(90,20)*{a}="a+"; (90,10)*{b}="b+"; (105,10)*{c}="c+"; (105,0)*{d}="d+";
(97.5,45)*{\HFp(S_2;\spincs_0)}="HFp";
(125,45)*{\gr};
(125,30)*{2};
(125,20)*{1};
(125,10)*{0};
(125,0)*{-1};
(125,-10)*{-2};
(125,-20)*{-3};
(125,-30)*{-4};
{\ar@[red] "Ua-";"Ub-"};
{\ar@[red] "Uc-";"Ud-"};
{\ar@[red] "U2a-";"U2b-"};
{\ar@[blue] "Ua-";"Uc-"};
{\ar@[blue]|{{\color{blue} -1}} "Ub-";"Ud-"};
{\ar@[blue] "U2a-";"U2c-"};
{\ar@[red] "U-1c";"U-1d"};
{\ar@[red] "a";"b"};
{\ar@[red] "c";"d"};
{\ar@[red] "Ua";"Ub"};
{\ar@[red] "Uc";"Ud"};
{\ar@[red] "U2a";"U2b"};
{\ar@[blue]|{{\color{blue} -1}} "U-1b";"U-1d"};
{\ar@[blue] "a";"c"};
{\ar@[blue]|{{\color{blue} -1}} "b";"d"};
{\ar@[blue] "Ua";"Uc"};
{\ar@[blue]|{{\color{blue} -1}} "Ub";"Ud"};
{\ar@[blue] "U2a";"U2c"};
{\ar@[red] "U-1c+";"U-1d+"};
{\ar@[red] "a+";"b+"};
{\ar@[red] "c+";"d+"};
{\ar@[blue]|{{\color{blue} -1}} "U-1b+";"U-1d+"};
{\ar@[blue] "a+";"c+"};
{\ar@[blue]|{{\color{blue} -1}} "b+";"d+"};
{\ar^{\iota_*} "HFm";"HFi"};
{\ar^{\pi_*} "HFi";"HFp"};
\endxy
\]
\caption{The short exact sequence $\HFcirc(S_2, \spincs_0)$. The red (vertical) arrows indicate the action of $\alpha$, while the blue (diagonal) arrows indicate the action of $\beta$.} \label{fig:S1xS2}
\end{figure}

\begin{example}\label{ex:S1xS2}
For any $n \ge 0$, consider the manifold $S_n = \conn^n S^1 \times S^2$, with its unique torsion spin$^c$ structure $\spincs_0$. As shown by Ozsv\'ath and Szab\'o, the group $\HF^{\le 0}(S_n, \spincs_0) \subset \HFi(S_n, \spincs_0)$ has a canonical top-dimensional generator (up to sign), which we denote by $\Theta_n$, in grading $n/2$. It is well-known that $\Theta_n$ generates $\HF^{\le 0}(S_n, \spincs_0)$ as a $\Lambda^* H_1(S_n) \otimes \Z[U]$--module, and it generates $\HFi(S_n, \spincs_0)$ as a $\Lambda^* H_1(S_n) \otimes \Z[U,U^{-1}]$--module. Then $\pi_*(\Theta_n)$ is a nontorsion class in $\QK^{\{0\}}(I^+(S_n, \spincs_0))$, while $\pi_*(U \Theta_n)=0$, meaning that $\dtop(S_n, \spincs_0) = n/2$. Moreover, if $\Delta$ is a generator of $\Lambda^n H_1(S_n)$, then $\pi_*(\Delta \Theta_n)$ is a nontorsion class in $\KQ^{\{0\}}$, so $\dbot(S_n, \spincs_0) = -n/2$. Hence the correction terms of $(S_n,\spincs_0)$ are simple and are given by Corollary \ref{cor:simple}:
\begin{equation} \label{eq:S1xS2}
d(S_n, \spincs_0, V) = \frac{n}{2} - \rank V \quad \text{and} \quad d^*(S_n, \spincs_0, V) = -\frac{n}{2} + \rank V.
\end{equation}

We illustrate the case where $n=2$ in Figure~\ref{fig:S1xS2}. Let $\alpha, \beta$ denote a basis for $H_1(S_2)$, and set $a = \Theta_2$, $b = \alpha \cdot \Theta_2$, $c = \beta \cdot \Theta_2$, and $d = (\alpha \wedge \beta) \cdot \Theta_2$.
Figure \ref{fig:S1xS2} depicts the short exact sequence relating $\HFm(S_2)$, $\HFi(S_2)$, and $\HFp(S_2)$. (Even though $U$ is only invertible on $\HFi$, we may label elements of $\HFp$ using negative powers of $U$ without confusion.)
\end{example}

\begin{figure}
\[
\xy
(15,-10)*{Uc}="Uc-"; (15,-20)*{Ud}="Ud-";
(0,-20)*{U^2a}="U2a-"; (0,-30)*{U^2b}="U2b-"; (15,-30)*{U^2c}="U2c-"; 
(7.5,-35)*{\vdots};
(7.5,45)*{\HFm(Y;\spincs_0)}="HFm";
(52.5,35)*{\vdots};
(45,30)*{U^{-1}b}="U-1b"; (60,30)*{U^{-1}c}="U-1c"; (60,20)*{U^{-1}d}="U-1d";
(45,20)*{a}="a"; (45,10)*{b}="b"; (60,10)*{c}="c"; (60,0)*{d}="d";
(45,0)*{Ua}="Ua"; (45,-10)*{Ub}="Ub"; (60,-10)*{Uc}="Uc"; (60,-20)*{Ud}="Ud";
(45,-20)*{U^2a}="U2a"; (45,-30)*{U^2b}="U2b"; (60,-30)*{U^2c}="U2c"; 
(52.5,-35)*{\vdots};
(52.5,45)*{\HFi(Y;\spincs_0)}="HFi";
(97.5,35)*{\vdots};
(90,30)*{U^{-1}b}="U-1b+"; (105,30)*{U^{-1}c}="U-1c+"; (105,20)*{U^{-1}d}="U-1d+";
(90,20)*{a}="a+"; (90,10)*{b}="b+"; (105,10)*{c}="c+"; (105,0)*{d}="d+";
(90,0)*{Ua}="Ua+"; (90,-10)*{Ub}="Ub+";
(97.5,45)*{\HFp(Y;\spincs_0)}="HFp";
(125,45)*{\gr};
(125,30)*{2};
(125,20)*{1};
(125,10)*{0};
(125,0)*{-1};
(125,-10)*{-2};
(125,-20)*{-3};
(125,-30)*{-4};
{\ar@[red] "Uc-";"Ud-"};
{\ar@[red] "U2a-";"U2b-"};
{\ar@[blue] "U2a-";"U2c-"};
{\ar@[red] "U-1c";"U-1d"};
{\ar@[red] "a";"b"};
{\ar@[red] "c";"d"};
{\ar@[red] "Ua";"Ub"};
{\ar@[red] "Uc";"Ud"};
{\ar@[red] "U2a";"U2b"};
{\ar@[blue]|{{\color{blue} -1}} "U-1b";"U-1d"};
{\ar@[blue] "a";"c"};
{\ar@[blue]|{{\color{blue} -1}} "b";"d"};
{\ar@[blue] "Ua";"Uc"};
{\ar@[blue]|{{\color{blue} -1}} "Ub";"Ud"};
{\ar@[blue] "U2a";"U2c"};
{\ar@[red] "U-1c+";"U-1d+"};
{\ar@[red] "a+";"b+"};
{\ar@[red] "c+";"d+"};
{\ar@[blue]|{{\color{blue} -1}} "U-1b+";"U-1d+"};
{\ar@[blue] "a+";"c+"};
{\ar@[blue]|{{\color{blue} -1}} "b+";"d+"};
{\ar@[red] "Ua+";"Ub+"};
{\ar^{\iota_*} "HFm";"HFi"};
{\ar^{\pi_*} "HFi";"HFp"};
\endxy
\]
\caption{The short exact sequence $\HF^\circ(Y,\spincs_0)$, where $Y = S^1 \times S^2 \conn S^3_0(K)$ and $K$ is the right-handed trefoil.} \label{fig:hyp}
\end{figure}

\begin{example} \label{ex:hyp}
Let $K \subset S^3$ denote the right-handed trefoil. The Heegaard Floer homology of $S^3_0(K)$ in the torsion spin$^c$ structure $\spincs_0$ is computed in \cite[Section 8.1]{oz:boundary}:
\[
\HFp_k(S^3_0(K), \spincs_0) =
\begin{cases}
\Z & \text{if } k \equiv 1/2 \pmod 2 \text{ and } k \ge -3/2 \\
\Z & \text{if } k \equiv -1/2 \pmod 2 \text{ and } k \ge -1/2 \\
0 & \text{otherwise}
\end{cases}
\]
with the action of a generator of $H_1(S^3_0(K))$ taking $\HFp_k(S^3_0(K))$ isomorphically to $\HFp_{k-1}(S^3_0(K))$ if $k \equiv 1/2 \pmod 2$ and $k \ge 1/2$. Since $\HFi(S^3_0(K)) \to \HFp(S^3_0(K))$ is surjective, it is easy to see that $\dbot(S^3_0(K)) = -1/2$ and $\dtop(S^3_0(K))=-3/2$.

Now let $Y = S^1 \times S^2 \conn S^3_0(K)$. The connected sum formula \cite{oz:hf-properties} implies that $\HF^\circ(Y, \spincs_0)$ is shown in Figure \ref{fig:hyp}, where $\alpha$ is a generator of $H_1(S^1 \times S^2)$ and $\beta$ is a generator of $H_1(S^3_0(K))$. (That is, the generators of $\HFi(Y, \spincs_0)$ are named just as in the previous example, but now $\pi_*(Ua)$ and $\pi_*(Ub)$ are nontrivial.) We compute the $d$ invariants of $Y$.

First, if $H = H_1^T(Y)$, then $\KK^H \HFi(Y, \spincs_0) = \QK^H \HFi(Y, \spincs_0)$ is generated by $\{U^i d \mid i \in \Z\}$; $\KK^H \HFp(Y, \spincs_0)$ is generated by $\{U^i d \mid i \le -1\} \cup \{U b\}$; and $J^+(Y, \spincs_0, H) = \QQ (J^+(Y, \spincs_0,H))$ is generated by $\{U^i d \mid i \le -1\}$.\footnote{This example shows that the final map in the exact sequence \eqref{eq:Kleftexact} need not be surjective, necessitating the definition of $J^+$.} Thus,
\[
\dbot(Y, \spincs_0) = d(Y, \spincs_0, H) = -1.
\]
Similarly, $\QQ^H \HFi(Y, \spincs_0) = \KQ^H \HFi(Y, \spincs_0)$ is generated by the classes of $\{U^i a \mid i \in \Z\}$, and $\QQ^H I^+ (Y, \spincs_0) = \KQ^H I^+(Y, \spincs_0)$ is generated by the classes of $\{U^i a \mid i \le 1\}$, which means that
\[
\dtop(Y, \spincs_0) = d^*(Y, \spincs_0, H) = -1
\]
as well.

Next, suppose $V$ is a primitive rank-$1$ subspace of $H_1(Y)$, generated by $p \alpha + q \beta$ for $p,q$ relatively prime. Then $\KK^V \HFi(Y, \spincs_0)$ is generated by $\{U^i d, U^i(pb+qc) \mid i \in \Z\}$, with the action of a generator of $H_1(Y)/V$ taking $U^i(pb+qc)$ to $\pm U^i d$, so $\QK^V \HFi(Y, \spincs_0)$ is generated by the classes of $U^i (pb+qc)$ for all $i \in \Z$. Likewise, $J^+(Y, \spincs_0, V)$ is generated by $\{U^i d, U^i(pb+qc) \mid i \le 0\}$, together with $pUb$ provided $p \ne 0$, and the action of $H_1(Y)/V$ is just like in $\KK^V \HFi(Y, \spincs_0)$. Thus, $\QQ^V J^+(Y, \spincs_0, V)$ is generated by $\{U^i(pb+qc) \mid i \le -1\}$, together with $b$ provided that $p \ne 0$. It follows that
\begin{equation} \label{eq:dhyp}
d(Y, \spincs_0, \gen{p\alpha + q\beta}) = \begin{cases} 0 & p = 0 \\ -2 & p \ne 0. \end{cases}
\end{equation}

The quotient $\QQ^V (\HFi(Y, \spincs_0))$ can be generated by the classes of $U^i a$ and $U^i(rb + sc)$ for all $\mid i \in \Z$, where $(r,s)$ is any pair of integers satisfying $ps-qr = 1$. The classes of $U^i(rb+sc)$ generate $\KQ^V (\HFi(Y, \spincs_0))$. Likewise, $\QQ^V (I^+ (Y, \spincs_0))$ is generated by the classes of $U^i a$ for $i \le -1$ and $U^i (rb+sc)$ for all $i \le 0$, along with a $\Z/p$ factor generated by $Ub$. If $p \ne 0$, then the element of least grading in $\KQ^V( \HFi(Y, \spincs_0)$ that maps to a nontorsion element of $\KQ^V (I^+(Y, \spincs_0))$ is $rb+sc$; if $p=0$, it is $U(rb+rc)$. Thus, we have
\begin{equation} \label{eq:d*hyp}
d^*(Y, \spincs_0, \gen{p\alpha + q\beta}) = \begin{cases} -2 & p = 0 \\ 0 & p \ne 0. \end{cases}
\end{equation}
\end{example}

%

\section{Properties of the correction terms} \label{S:properties}

We now prove that the correction terms satisfy conjugation invariance, orientation reversal, and connected sum formulas, and study the behavior of the correction terms under negative definite cobordisms. Most of these results are analogous to those in \cite[Sections 4 and 9]{oz:boundary}.

\begin{proposition}[Conjugation invariance] \label{prop:conjugation}
Let $Y$ be a closed, oriented 3-manifold with standard $\HFi$, let $\spincs$ be a torsion spin$^c$ structure on $Y$, and let $\bar \spincs$ denote the conjugate spin$^c$ structure. For any subspace $V \subset H_1^T(Y)$, we have $d(Y, \spincs, V) = d(Y, \bar\spincs, V)$ and $d^*(Y, \spincs, V) = d^*(Y, \bar\spincs, V)$.
\end{proposition}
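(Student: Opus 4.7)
The plan is to reduce the statement directly to the conjugation symmetry of Heegaard Floer homology established by Ozsv\'ath--Szab\'o. Recall that for any closed oriented $3$-manifold $Y$ and any \spinc\ structure $\spincs$, there is an isomorphism of chain complexes $J\co \CFcirc(Y,\spincs) \to \CFcirc(Y,\bar\spincs)$ that preserves the relative $\Q$-grading, commutes with $U$, and intertwines the action of $H_1(Y;\Z)/\Tors$; moreover $J$ is natural with respect to the maps $\iota$ and $\pi$ so it induces an isomorphism of the short exact sequence $\operatorname{HF}^\circ(Y,\spincs)$ with $\operatorname{HF}^\circ(Y,\bar\spincs)$ as $\Lambda^*(H_1^T(Y)) \otimes \Z[U]$--modules. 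The absolute gradings on the two sides also agree when $\spincs$ is torsion, because the Chern class $c_1(\spincs) = -c_1(\bar\spincs)$ is torsion and so enters the absolute grading formula trivially.

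Given this, the proof reduces to a formal observation. The invariants $d(Y,\spincs,V)$ and $d^*(Y,\spincs,V)$ are defined purely from the short exact sequence $I^\circ(Y,\spincs)$ together with its $\Lambda^* H_1^T(Y) \otimes \Z[U]$--module structure by applying the functors $\KK^V$, $\QQ^V$, $\QQ$, and $\KK$, taking images, and measuring the minimum grading of nontorsion elements. Since each of these constructions depends only on the module-theoretic data, the isomorphism $J$ induces compatible isomorphisms
\[
\QK^V(\HFi(Y,\spincs)) \xrightarrow{\cong} \QK^V(\HFi(Y,\bar\spincs)), \quad \QQ(J^+(Y,\spincs,V)) \xrightarrow{\cong} \QQ(J^+(Y,\bar\spincs,V)),
\]
intertwining the maps in \eqref{eq:QKpi}, and similarly for the $\KQ^V$ side relevant to $d^*$. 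Comparing minimum gradings of nontorsion images yields $d(Y,\spincs,V) = d(Y,\bar\spincs,V)$ and $d^*(Y,\spincs,V) = d^*(Y,\bar\spincs,V)$.

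The only point that requires a small check is that $J$ intertwines the $H_1$-action in a way compatible with the subspace $V$. Ozsv\'ath and Szab\'o show that $J \circ A_\gamma = \pm A_\gamma \circ J$ for each $\gamma \in H_1(Y)/\Tors$; the possible sign is harmless here because $V$ is a subgroup, so $\KK^V$ and $\QQ^V$ are insensitive to replacing each generator of $V$ by its negative. I expect this last bookkeeping about signs to be the only subtle step; the rest is routine naturality of $\KK^V$ and $\QQ^V$.
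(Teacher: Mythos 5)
Your proposal is correct and follows the same route as the paper, which simply observes that the claim is an immediate consequence of the conjugation symmetry of Heegaard Floer homology; your write-up just spells out the (routine) naturality of $\KK^V$, $\QQ^V$, and the grading/$H_1$-equivariance bookkeeping that the paper leaves implicit.
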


\begin{proof}
This follows immediately from the conjugation invariance of Heegaard Floer homology.
\end{proof}

\begin{proposition}[Orientation reversal] \label{prop:duality}
Let $Y$ be a closed, oriented 3-manifold with standard $\HFi$, and let $\spincs$ be a torsion spin$^c$ structure on $Y$. For any subspace $V \subset H_1^T(Y)$, we have $d(Y, \spincs,V) = -d^*(-Y, \spincs,V)$ and $d^*(Y, \spincs,V) = -d^*(-Y, \spincs,V)$.
\end{proposition}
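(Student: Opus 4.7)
The plan is to deduce the orientation-reversal formulas from Heegaard Floer Poincar\'e duality relating $\HFcirc(Y,\spincs)$ and $\HFcirc(-Y,\spincs)$, combined with the module-level duality of Lemma~\ref{lemma:QKduality}. The argument parallels the rational homology sphere case from \cite{oz:boundary}, with Lemma~\ref{lemma:QKduality} handling the additional exterior-algebra structure.

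First I would invoke Heegaard Floer Poincar\'e duality: there is a perfect graded pairing inducing an isomorphism of long exact sequences $\HFcirc(-Y,\spincs) \cong \HFcirc(Y,\spincs)^*$, shifting gradings by $-2$, that interchanges $\HFm$ and $\HFp$ and hence interchanges $I^-$ with $I^+$. Concretely, one obtains graded $\Lambda^* H_1^T(Y)$-module isomorphisms
\[
\HFi(-Y,\spincs) \cong \HFi(Y,\spincs)^*, \quad I^+(-Y,\spincs) \cong I^-(Y,\spincs)^*, \quad I^-(-Y,\spincs) \cong I^+(Y,\spincs)^*,
\]
each with the grading shift of $-2$, under which $\pi_*^{-Y}$ corresponds to the dual of $\iota_*^{Y}$. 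Standardness of $\HFi$ ensures all the relevant modules are free in each grading, so no Ext correction appears.

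Next I would apply Lemma~\ref{lemma:QKduality} to translate the $\KQ^V$ functor on the $-Y$ side into $\QK^V$ on the $Y$ side. Using $(\QQ^V M)^* \cong \KK_V(M^*)$ and $(\KK^V M)^* \cong \QQ_V(M^*)$ twice, one obtains graded isomorphisms
\[
\KQ^V(\HFi(-Y,\spincs)) \cong \QK^V(\HFi(Y,\spincs))^*, \quad \KQ^V(I^+(-Y,\spincs)) \cong \QK^V(I^-(Y,\spincs))^*,
\]
again shifted by $-2$, under which $\KQ^V(\pi_*^{-Y})$ becomes the dual of the map $\QK^V(\iota_*^{Y}): \QK^V(I^-(Y,\spincs)) \to \QK^V(\HFi(Y,\spincs))$ obtained by applying $\QK^V$ to \eqref{eq:Kexact}.

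To close the argument, I would use the reformulation recorded at the end of Section~\ref{S:correction}: $d^-(Y,\spincs,V) = d(Y,\spincs,V) - 2$ is the maximal grading of an element of $\QK^V(I^-(Y,\spincs))$ that maps nontrivially into $\QK^V(\HFi(Y,\spincs))$. Dualizing, this maximal grading corresponds, up to a sign and the $-2$ Poincar\'e shift, to the minimal grading of a nontorsion element in the image of $\KQ^V(\pi_*^{-Y})$, i.e.\ to $d^*(-Y,\spincs,V)$. The two $-2$ shifts (from Poincar\'e duality and from $d^- = d - 2$) cancel, yielding $d^*(-Y,\spincs,V) = -d(Y,\spincs,V)$. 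The identity $d^*(Y,\spincs,V) = -d(-Y,\spincs,V)$ follows by applying the same argument with $-Y$ in place of $Y$ and using $-(-Y) = Y$.

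The principal obstacle will be the careful bookkeeping of the two independent grading shifts by $-2$---one from Heegaard Floer Poincar\'e duality, the other from $d^- = d-2$ (respectively $d^{*-} = d^*-2$)---and checking that they cancel so that only the sign change survives. A secondary, more routine point is verifying that Lemma~\ref{lemma:QKduality} applies in both directions to the relevant modules, which uses that standard $\HFi$, together with the resulting $I^\pm$, is free and finitely generated in each degree.
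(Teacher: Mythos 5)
Your proposal is correct and follows essentially the same route as the paper: the paper likewise combines the duality isomorphisms (there phrased via the Heegaard Floer cohomology groups of $-Y$ together with the nonsingular pairings, i.e.\ Lemma~\ref{lemma:QKduality}) with the reformulations $d^- = d-2$ and $d^{*-} = d^*-2$, and the two grading shifts by $-2$ cancel exactly as you describe. The one caveat is that standardness makes $\HFi$ free but not $I^{\pm}$ (so the dualized sequences agree only up to torsion/Ext discrepancies, not on the nose); as in the paper's argument, this is harmless because the correction terms are detected by nontorsion elements only.
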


\begin{proof}
We adapt the proof of \cite[Proposition 4.2]{oz:boundary}. By interchanging the roles of $Y$ and $-Y$, it suffices to prove the first statement. Furthermore, by Lemma \ref{lemma:samerank}, we may assume that $V$ is a primitive subspace. Let $k = \rank V$ and $b = b_1(Y)$.

As in \cite{oz:boundary}, we denote the Heegaard Floer cohomology groups of $(-Y,\spincs)$ by $\HF_-(-Y,\spincs)$, $\HF_\infty(-Y,\spincs)$, and $\HF_+(-Y,\spincs)$, indicating the absolute grading with a superscript. Note that the action of $H_1^T(Y)$ on Heegaard Floer cohomology increases grading by $1$. There are duality isomorphisms $\DD^-$, $\DD^\infty$, $\DD^+$, which fit into a commutative diagram whose rows are exact:
\[
\xymatrix{
\cdots \ar[r]^-{\delta} & \HFm_m(Y,\spincs) \ar[r]^{\iota_*} \ar[d]^{\DD^-}_{\cong} & \HFi_m (Y,\spincs) \ar[r]^{\pi_*} \ar[d]^{\DD^\infty}_{\cong} & \HFp_m(Y,\spincs) \ar[r]^-{\delta} \ar[d]^{\DD^+}_{\cong} & \cdots
 \\
\cdots \ar[r]^-{\delta^*} & \HF_+^{-m-2}(-Y,\spincs) \ar[r]^{\pi^*} & \HF_\infty^{-m-2}(-Y,\spincs) \ar[r]^{\iota^*} & \HF_-^{-m-2}(-Y,\spincs) \ar[r]^-{\delta^*} & \cdots
}
\]
Let $I_+(-Y, \spincs) = \coker \delta^*$ and $I_-(-Y, \spincs) = \ker \delta^*$, which fit into a short exact sequence \begin{equation} \label{eq:I*exact}
0 \to I_+(-Y, \spincs) \xrightarrow{\pi^*} \HF_\infty(-Y, \spincs) \xrightarrow{\iota^*} I_-(-Y, \spincs) \to 0.
\end{equation}
Moreover, the duality isomorphisms commute with the $H_1$ action. As a result, the isomorphisms $\DD^\circ$ descend to isomorphisms on kernels and quotients of the $H_1$ action. Specifically, if we let $J_-(-Y, \spincs, V)$ denote the image of the map
\[
\KK^V(\iota^*)\co \KK^V( \HF_\infty (-Y,\spincs) ) \to \KK^V (I_-(-Y, \spincs)),
\]
we obtain, for any $m \in \Q$, a commutative diagram
\begin{equation} \label{eq:QKdual}
\xymatrix@C0.6in{
\QK^V_m( \HFi(Y,\spincs) ) \ar[r]^{\QK^V_m(\pi_*)} \ar[d]^{\DD^\infty}_\cong & \QQ_m( J^+(Y, \spincs, V)) \ar[d]^{\DD^+}_\cong \\
\QK^V_{-m-2}( \HF_\infty (-Y,\spincs) ) \ar[r]^{\QK^V_m(\iota^*)} & \QQ_{-m-2} (J_-(-Y, \spincs, V)).
}
\end{equation}

By Lemma \ref{lemma:QKduality}, the nonsingular pairings
\[
\HF_\infty(-Y, \spincs) \otimes \HFi(-Y, \spincs) \to \Z
\]
and
\[
\HF_-(-Y, \spincs) \otimes \HFm(-Y, \spincs) \to \Z
\]
descend, via Lemma \ref{lemma:QKduality}, to nonsingular pairings
\[
\QK^V( \HF_\infty (-Y,\spincs) ) \otimes \KQ^V (\HFi(-Y, \spincs)) \to \Z
\]
and
\[
\QQ_{l} (J_-(-Y, \spincs, V)) \otimes \KK_l(J^-(-Y, \spincs,V)) \to \Z.
\]
Therefore, for any $l \in \Q$, the map
\begin{equation} \label{eq:QKiota*}
\QK^V_l(\iota^*) \co \QK^V_{l}( \HF_\infty (-Y,\spincs) ) \to \QQ_{l} (J_-(-Y, \spincs, V))
\end{equation}
contains nontorsion elements in its image if and only if the map
\begin{equation} \label{eq:KQiota}
\KQ^V_l(\iota_*) \co \KK_l(J^-(-Y, \spincs,V)) \to \KQ^V_l(\HFi(-Y, \spincs))
\end{equation}
does. As a result, we see (just as in the proof of \cite[Proposition 4.2]{oz:boundary}) that
\[
d(Y,\s,V) = -2 - d^{*-}(-Y, \spincs,V) = -d^*(-Y, \spincs, V),
\]
as required.
%
\end{proof}

\begin{proposition}[Additivity] \label{prop:additivity}
Let $Y$ and $Z$ be closed, oriented $3$-manifolds with standard $\HFi$, and let $\spinct$ and $\spincu$ be torsion spin$^c$ structures on $Y$ and $Z$ respectively. For any subspaces $V \subset H_1^T(Y)$ and $W \subset H_1^T(Z)$, we have \[
d(Y \conn Z, \spinct \conn \spincu, V \oplus W) = d(Y, \spinct, V) + d(Z, \spincu, W)
\]
and
\[
\dtop(Y \conn Z, \spinct \conn \spincu) = \dtop(Y, \spinct) + \dtop(Z, \spincu).
\]
\end{proposition}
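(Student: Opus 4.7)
The plan is to adapt the additivity argument of \cite[Proposition 9.5]{oz:boundary}, leveraging the algebraic framework of Section~\ref{S:algebra}. The key input is the \oz\ connected sum formula, which at the chain level gives a quasi-isomorphism
\begin{equation*}
\CF^\infty(Y \conn Z, \spinct \conn \spincu) \simeq \CF^\infty(Y, \spinct) \otimes_{\Z[U, U^{-1}]} \CF^\infty(Z, \spincu),
\end{equation*}
with an appropriate grading shift, respecting the $H_1$-action under the identification $H_1^T(Y \conn Z) \cong H_1^T(Y) \oplus H_1^T(Z)$. Because $\HFi$ is standard (hence free over $\Z[U, U^{-1}]$) for both summands, the K\"unneth spectral sequence collapses, and in fact the entire short exact sequence $I^\circ(Y \conn Z, \spinct \conn \spincu)$ decomposes as the tensor product of $I^\circ(Y, \spinct)$ and $I^\circ(Z, \spincu)$.

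Feeding this decomposition into Lemma~\ref{lemma:directsum}, I obtain compatible isomorphisms such as
\begin{equation*}
\KK^{V \oplus W}\bigl(\HFi(Y \conn Z, \spinct \conn \spincu)\bigr) \cong \KK^V\bigl(\HFi(Y, \spinct)\bigr) \otimes_{\Z[U, U^{-1}]} \KK^W\bigl(\HFi(Z, \spincu)\bigr),
\end{equation*}
together with analogues obtained by replacing $\KK$ with $\QQ$ and applying them to the $I^\pm$ terms. From these I would identify $J^+(Y \conn Z, \spinct \conn \spincu, V \oplus W)$ as closely as possible with $J^+(Y, \spinct, V) \otimes J^+(Z, \spincu, W)$. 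The inequality $d(Y \conn Z, \spinct \conn \spincu, V \oplus W) \le d(Y, \spinct, V) + d(Z, \spincu, W)$ then follows by picking witness elements $\xi$ for $d(Y, \spinct, V)$ and $\eta$ for $d(Z, \spincu, W)$ and checking that $\xi \otimes \eta$ serves as a witness in the correct grading: it lies in $\KK^{V \oplus W}(\HFi(Y \conn Z))$, and its image in $\QQ(J^+(Y \conn Z, \spinct \conn \spincu, V \oplus W))$ is nontorsion by freeness.

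For the reverse inequality, I would run the analogous tensor product argument for $d^*$ (using $\KQ^V$ in place of $\QK^V$), and then apply Proposition~\ref{prop:duality} to $(-Y) \conn (-Z)$ to convert the resulting additivity of $d^*$ into the required lower bound for $d$. The additivity of $\dtop$ is then immediate as the special case $V = W = \{0\}$ of the $d$ statement, since $\dtop(Y,\spincs) = d(Y, \spincs, \{0\})$.

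The principal obstacle will be identifying $J^+(Y \conn Z, \spinct \conn \spincu, V \oplus W)$ with (or at least pinning down its minimal-grading nontorsion classes in terms of) the tensor product $J^+(Y, \spinct, V) \otimes J^+(Z, \spincu, W)$. Since $J^+$ is defined as the image of a map and images do not in general commute with tensor products, this step requires genuine use of the freeness of $\HFi$ supplied by the standard hypothesis, together with the explicit structure of the short exact sequences of Section~\ref{S:correction}. A secondary bookkeeping issue is ensuring that the grading shift in the connected sum formula combines correctly with those in the definitions of $d$ and $d^*$; once these are settled, the verification that witness elements tensor to witness elements is essentially formal.
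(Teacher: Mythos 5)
Your overall skeleton (connected sum formula, Lemma~\ref{lemma:directsum}, tensoring witness classes, then Proposition~\ref{prop:duality} applied to $(-Y) \conn (-Z)$ for the reverse inequality) is the same as the paper's, but the step you yourself flag as ``the principal obstacle'' is a genuine gap, and it is not repaired by ``freeness.'' The Ozsv\'ath--Szab\'o K\"unneth theorem \cite[Theorem 6.2]{oz:hf-properties} is a statement about $\CF^-$ (equivalently $\HF^{\le 0}$), not about $\CF^+$, so there is no clean decomposition of the short exact sequence $I^\circ(Y \conn Z, \spinct \conn \spincu)$ as a tensor product of $I^\circ(Y,\spinct)$ and $I^\circ(Z,\spincu)$: already $I^-(Y \conn Z)$ is only the \emph{image} of $(I^-(Y)\otimes_{\Z[U]} I^-(Z))[2]$ in $\HFi(Y \conn Z)$, and the comparison maps have kernels and torsion cokernels. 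On the plus side the situation is worse: $J^+$ is itself defined as an image, images do not commute with tensor products or with $\QK^{V\oplus W}$, and $\QQ(J^+)$ can contain torsion (compare the $\Z/p$ summand appearing in Example~\ref{ex:hyp}), so the assertion that $\xi\otimes\eta$ has nontorsion image ``by freeness'' is not automatic. As written, neither of your two inequalities is actually established.

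The paper's resolution is precisely to avoid $J^+$ and $I^+$ in this argument: it uses the reformulation $d(Y,\spincs,V)=d^-(Y,\spincs,V)+2$, where $d^-$ is read off from $\QK^V(\iota_*)\co \QK^V(I^-(Y,\spincs)) \to \QK^V(\HFi(Y,\spincs))$. Since the connected sum formula does hold at the level of $\CF^-$ and $\CF^\infty$, one gets a commutative square comparing $(I^-(Y)\otimes_{\Z[U]} I^-(Z))[2] \to (\HFi(Y)\otimes_{\Z[U,U^{-1}]}\HFi(Z))[2]$ with $I^-(Y\conn Z) \to \HFi(Y\conn Z)$, in which the map of $\HFi$'s is an isomorphism (here standardness enters, to kill the Tor term and control kernels); applying $\QK^{V\oplus W}$ and Lemma~\ref{lemma:directsum}, all one needs is that if $\QK^V(\iota^Y_*)$ is nonzero in grading $l$ and $\QK^W(\iota^Z_*)$ is nonzero in grading $m$, then $\QK^{V\oplus W}(\iota^{Y\conn Z}_*)$ is nonzero in grading $l+m+2$ --- no identification of images is required. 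This gives $d(Y\conn Z,\spinct\conn\spincu,V\oplus W)\ge d(Y,\spinct,V)+d(Z,\spincu,W)$, the analogous argument with $\KQ$ gives the corresponding inequality for $d^*$, and applying both to the orientation reversals together with Proposition~\ref{prop:duality} yields equality, exactly along the lines you envisaged for the second half of your plan. If you reroute your witness-tensoring through $d^-$ and $I^-$ in this way (keeping track of the grading shift $[2]$), the proof goes through.
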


\begin{proof}
By the connected sum formula for Heegaard Floer homology \cite[Theorem 6.2]{oz:hf-properties}, there are graded isomorphisms making the diagram
\begin{equation} \label{eq:connCF}
\xymatrix{
H_*(\CF^-(Y, \spincs) \otimes_{\Z[U]} \CF^- (Z, \spinct))[2] \ar[r]^-{F^-_{Y \conn Z}}_-{\cong} \ar[d]^{(\iota^Y \otimes \iota^Z)_*}  &
  \HF^-(Y \conn Z, \spincs \conn \spinct) \ar[d]^{\iota^{Y \conn Z}_*} \\
H_*(\CF^\infty(Y, \spincs) \otimes_{\Z[U,U^{-1}]} \CF^\infty (Z, \spinct))[2] \ar[r]^-{F^\infty_{Y \conn Z}}_-{\cong} &
  \HF^\infty (Y \conn Z)
}
\end{equation}
commute.\footnote{The notation $[2]$ means that the grading on each of the tensor products is shifted upward by $2$. This occurs because the K\"unneth formula for connected sums actually holds for $\HF^{\le 0}$, which is isomorphic to $\HF^-[2]$.} Combining this result with the algebraic K\"unneth theorem and the fact that $Y$, $Z$, and $Y \conn Z$ have standard $\HFi$, we have a commutative diagram
\begin{equation} \label{eq:connHF}
\xymatrix{
(\HF^-(Y, \spincs) \otimes_{\Z[U]} \HF^-(Z, \spinct))[2] \ar[r]^-{F^-_{Y \conn Z}} \ar[d]^{\iota_*^Y \otimes \iota_*^Z} & \HF^-(Y \conn Z, \spincs \conn \spinct) \ar[d]^{\iota_*^{Y \conn Z}}  \\
(\HF^\infty(Y, \spincs) \otimes_{\Z[U]} \HF^\infty(Z, \spinct))[2] \ar[r]^-{F^\infty_{Y \conn Z}}_-{\cong} & \HF^\infty({Y \conn Z}, {\spincs \conn \spinct}) }
\end{equation}
in which $\ker F^-_{Y \conn Z} = 0$ and $\coker F^-_{Y \conn Z}$ is a torsion $\Z[U]$--module. The images of the two vertical maps are, by definition, $(I^-(Y, \spincs) \otimes_{\Z[U]} I^-(Z, \spinct))[2]$ and $I^-(Y \conn Z, \spincs \conn \spinct)$, so we obtain a commutative diagram
\begin{equation} \label{eq:connI-}
\xymatrix{
(I^-(Y, \spincs) \otimes_{\Z[U]} I^-(Z, \spinct))[2] \ar[r]^-{F^-_{Y \conn Z}} \ar[d]^{\iota_*^Y \otimes \iota_*^Z} & I^-(Y \conn Z, \spincs \conn \spinct) \ar[d]^{\iota_*^{Y \conn Z}}  \\
(\HF^\infty(Y, \spincs) \otimes_{\Z[U]} \HF^\infty(Z, \spinct))[2] \ar[r]^-{F^\infty_{Y \conn Z}}_-{\cong} & \HF^\infty({Y \conn Z}, {\spincs \conn \spinct}) }
\end{equation}
Note that the maps $F^-_{Y \conn Z}$ and $\iota_*^Y \otimes \iota_*^Z$ in \eqref{eq:connI-} need not be injective, but they have the same kernel.

The groups in \eqref{eq:connHF} and \eqref{eq:connI-} are all modules over
\[
\Lambda^*(H_1^T(Y \conn Z)) \cong \Lambda^*(H_1^T(Y) \oplus H_1^T(Z)),
\]
and the maps are equivariant. By applying the composite functor $\QK^{V \oplus W}$ and using Lemma \ref{lemma:directsum}, we obtain a commutative diagram
\[
\xymatrix@C1in{
{\left( \begin{array}{c} \QK^V(I^-(Y, \spincs)) \\ \otimes_{\Z[U]} \\ \QK^W(I^-(Z, \spinct)) \end{array} \right)[2]} \ar[r]^-{\QK^{V \oplus W} (F^-_{Y \conn Z})} \ar[d]^{\QK^V(\iota_*^Y) \otimes \QK^W(\iota_*^Z)} &
\QK^{V\oplus W}(I^-(Y \conn Z, \spincs \conn \spinct)) \ar[d]^{\QK^{V\oplus W}(\iota_*^{Y \conn Z})}  \\
{\left( \begin{array}{c} \QK^V(\HFi(Y, \spincs)) \\ \otimes_{\Z[U]} \\ \QK^W(\HFi(Z, \spinct)) \end{array} \right)[2]} \ar[r]^-{\QK^{V \oplus W} (F^\infty_{Y \conn Z})}_-{\cong} & \QK^{V \oplus W}(\HFi({Y \conn Z}, {\spincs \conn \spinct})).}
\]

If $\QK^V(\iota_*^Y)$ is nonzero in grading $l$, and $\QK^V(\iota_*^Z)$ is nonzero in grading $m$, then $\QK^{V \oplus W}(\iota_*^{Y \conn Z})$ must be nonzero in grading $l + m +2$. Therefore,
\[
d^-(Y \conn Z, \spincs \conn \spinct, V \oplus W) \ge d^-(Y, \spincs, V) + d^-(Z, \spinct, W) + 2,
\]
so
\[
d(Y \conn Z, \spincs \conn \spinct, V \oplus W) \ge d(Y, \spincs, V) + d(Z, \spinct, W).
\]
A similar argument using $\KQ^{V \oplus W}$ in place of $\QK^{V \oplus W}$ shows that
\[
d^*(Y \conn Z, \spincs \conn \spinct, V \oplus W) \ge d^*(Y, \spincs, V) + d^*(Z, \spinct, W).
\]
Applying the same reasoning to $-(Y \conn Z) = (-Y) \conn (-W)$, we see that
\[
d(-(Y \conn Z), \spincs \conn \spinct, V \oplus W) \ge d(-Y, \spincs, V) + d(-Z, \spinct, W)
\]
and
\[
d^*(-Y \conn Z, \spincs \conn \spinct, V \oplus W) \ge d^*(-Y, \spincs, V) + d^*(-Z, \spinct, W).
\]
The desired result then follows from Proposition \ref{prop:duality}.
\end{proof}

Just as with rational homology spheres, the key property of the generalized correction terms is their behavior with respect to negative-definite cobordisms, given by a mild generalization of the results of \cite[Section 9]{oz:boundary}. To begin, note that if $X$ is a 4-manifold with possibly disconnected boundary, and $a \in H_2(X)$ is in the image of $i_*\co H_2(\partial X) \to H_2(X)$, then $a \cdot b=0$ for any $b \in H_2(X)$. Thus, the intersection pairing on $H_2(X)$ descends to a pairing on $H_2(X)/i_*(H_2(\partial X))$.

\begin{theorem} \label{thm:negdefcob}
Let $Y$ and $Y'$ be closed, oriented $3$-manifolds with standard $\HFi$, and let $W$ be an oriented cobordism from $Y$ to $Y'$ with the following properties:
\begin{enumerate}
\item The maps $H_1(Y;\Q) \to H_1(W;\Q)$ and $H_1(Y';\Q) \to H_1^T(W;\Q)$ are both surjective.
\item For any class $a \in H_2(W)$ not in the image of $H_2(\partial W) \to H_2(W)$, we have $a^2 < 0$.
\end{enumerate}
Let $V$ and $V'$ be the kernels of the maps $H_1^T(Y) \to H_1^T(W)$ and $H_1^T(Y') \to H_1^T(W)$, respectively. Let $\spincs$ be any spin$^c$ structure on $W$ whose restrictions $\spinct = \spinct|_{Y}$ and $\spinct' = \spincs|_{Y'}$ are both torsion. Then the map
\[
F^\infty_{W,\spincs}\co \HFi(Y, \spinct) \to \HFi(Y', \spinct')
\]
factors as
\[
\HFi(Y, \spinct) \twoheadrightarrow \QQ^V (\HFi(Y, \spinct)) \xrightarrow{\cong} \KK^{V'} (\HFi(Y', \spinct')) \hookrightarrow \HFi(Y', \spinct').
\]
\end{theorem}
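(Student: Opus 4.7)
The plan is to extend the proof of \cite[Proposition 9.4]{oz:boundary}, which treats the special case in which both maps $H_1(Y;\Q)\to H_1(W;\Q)$ and $H_1(Y';\Q)\to H_1(W;\Q)$ are isomorphisms (so $V=V'=0$ and the desired factorization collapses to the statement that $F^\infty_{W,\spincs}$ is itself an isomorphism). The argument splits naturally into two stages: first, the factorization through $\QQ^V(\HFi(Y,\spinct))$ and into $\KK^{V'}(\HFi(Y',\spinct'))$, which is purely formal; second, verifying that the induced middle map is an isomorphism, which uses both negative-definiteness and the standard $\HFi$ hypothesis.

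For the first stage, I would exploit $H_1$--equivariance of the cobordism map. Given $\gamma \in H_1(W)/\Tors$, hypothesis (1) guarantees lifts $\gamma_Y \in H_1^T(Y)$ and $\gamma_{Y'} \in H_1^T(Y')$ mapping to $\gamma$, and the cobordism map satisfies
\[
F^\infty_{W,\spincs}(\gamma_Y \cdot x) = \gamma_{Y'} \cdot F^\infty_{W,\spincs}(x).
\]
Applied with $\gamma=0$, $\gamma_Y \in V$, $\gamma_{Y'}=0$, this gives $V \cdot \HFi(Y,\spinct) \subset \ker F^\infty_{W,\spincs}$, so the map descends to $\QQ^V(\HFi(Y,\spinct))$. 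Dually, with $\gamma_Y=0$ and $\gamma_{Y'} \in V'$, it gives that $V'$ annihilates the image, so the image lies in $\KK^{V'}(\HFi(Y',\spinct'))$. Both assertions use only hypothesis (1) and the module-structure properties of the cobordism map.

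For the second stage, I would adapt the grading and rank-counting argument of \cite[\S 9]{oz:boundary}. The standard $\HFi$ hypothesis identifies each side with the model $M^{\st}$ of Section~\ref{S:algebra}; then Lemmas~\ref{lemma:tower} and~\ref{lemma:directsum} ensure that $\QQ^V(\HFi(Y,\spinct))$ and $\KK^{V'}(\HFi(Y',\spinct'))$ are free $\Z[U,U^{-1}]$--modules whose $\Z$--ranks agree in every grading. Hypothesis (2), read as negative-definiteness of the intersection form on $H_2(W)/i_*H_2(\partial W)$, bounds the grading shift of $F^\infty_{W,\spincs}$ below and is saturated (up to conjugation) by a ``characteristic'' spin$^c$ structure; for such a choice the induced map on $\QQ^V$ preserves gradings and sends a chosen generator of the top tower to a nonzero element of the top tower on the target. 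Matching ranks in each grading then promotes this injectivity on one tower to an isomorphism of the whole middle module.

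The main obstacle will be Stage 2. Stage 1 is essentially formal, but generalizing Ozsv\'ath--Szab\'o's tower argument from the $V=V'=0$ case to arbitrary primitive $V$, $V'$ requires carefully tracking the interplay between the $\Lambda$--module structure and the $U$--action, and upgrading a grading-by-grading rank comparison to an honest isomorphism rather than merely an isomorphism of $\Z[U,U^{-1}]$--ranks. This is the sort of algebraic subtlety the introduction warns is significantly trickier in this generalized setting, and the reason the machinery of Section~\ref{S:algebra} (particularly the duality and K\"unneth-type lemmas for $\QQ^V$ and $\KK^V$) is set up in advance.
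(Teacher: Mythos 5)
Your Stage 1 is correct and matches the intended argument: the equivariance of cobordism maps with respect to the $H_1$--actions, applied to classes of $V$ and $V'$ (which map to zero in $H_1(W)/\Tors$), shows that $V \cdot \HFi(Y,\spinct)$ lies in $\ker F^\infty_{W,\spincs}$ and that the image is annihilated by $V'$, so one gets an induced map $\QQ^V(\HFi(Y,\spinct)) \to \KK^{V'}(\HFi(Y',\spinct'))$. But that part is formal; the entire content of the theorem is that this induced map is an isomorphism, and there your Stage 2 has a genuine gap. The paper's own proof is simply to run the proof of \cite[Theorem 9.1]{oz:boundary} verbatim: one inducts over a handle decomposition of $W$, computing the $1$-- and $3$--handle maps on $\HFi$ explicitly (these are what produce the $\QQ^V$ and $\KK^{V'}$ factors), and using hypothesis (2) to arrange that the $2$--handle additions are of the type covered by \cite[Proposition 9.4]{oz:boundary} (negative-definite, along null-homologous knots), where the surgery-triangle/twisted-coefficient argument gives an isomorphism on $\HFi$ for \emph{every} spin$^c$ structure with torsion restrictions. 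In other words, the result you took only as a template is the inductive engine of the actual proof; your sketch replaces that mechanism with an assertion.

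Concretely, three things go wrong in Stage 2 as written. First, the theorem is claimed for every $\spincs$ on $W$ restricting to torsion structures, whereas you argue only for one ``characteristic'' spin$^c$ structure chosen to saturate a grading bound, with no way to pass to the others (conjugation only relates $\spincs$ to $\bar{\spincs}$); moreover nothing requires, or in general permits, the induced map to preserve gradings, and grading shifts play no role in the statement. Second, the key claim --- that a generator of the relevant tower of $\QQ^V(\HFi(Y,\spinct))$ has nonzero image --- is exactly the nontrivial content, and you give no mechanism for it: negative-definiteness alone does not produce it; in Ozsv\'ath--Szab\'o it emerges from the handle-by-handle computation. Third, ``equal $\Z$--ranks in each grading plus nonzero on one tower'' does not yield an isomorphism over $\Z$: you would need the image of a module generator to again be a generator (a map that is multiplication by $2$ on a tower defeats any rank count), then equivariance over $\Lambda^*(H_1^T(W)) \otimes \Z[U,U^{-1}]$ and the fact that $\QQ^V(\HFi(Y,\spinct))$ and $\KK^{V'}(\HFi(Y',\spinct'))$ are free rank-one modules over that ring to get surjectivity, and finally freeness with finite rank in each grading to get injectivity. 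As it stands, the step that carries the theorem is assumed rather than proved.
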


\begin{proof}
This proceeds exactly like the proof of \cite[Theorem 9.1]{oz:boundary}.
\end{proof}

As a consequence, we see that the correction terms are invariants of the rational homology cobordism class of $Y$:

\begin{proposition} \label{prop:QHcob}
Let $Y$ and $Y'$ be closed, oriented $3$-manifolds with standard $\HFi$, and let $W$ be a rational homology cobordism from $Y$ to $Y'$ (meaning that the inclusions $i\co Y \to W$ and $i' \co Y' \to W$ induce isomorphisms on rational homology). Let $\spincs$ be any spin$^c$ structure on $W$ whose restrictions $\spinct = \spinct|_{Y}$ and $\spinct' = \spincs|_{Y'}$ are both torsion. Then for any subspaces $V \subset H_1^T(Y)$ and $V' \subset H_1^T(Y')$ such that $i_*(V \otimes \Q) = i'_*(V' \otimes \Q)$, we have
\[
d(Y, \spinct, V) = d(Y', \spinct', V') \quad \text{and} \quad d^*(Y, \spinct, V) = d^*(Y', \spinct', V').
\]
\end{proposition}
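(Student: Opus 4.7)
The plan is to apply Theorem~\ref{thm:negdefcob} to both $W$ and its orientation reversal $\bar W$ (viewed as a cobordism from $Y'$ to $Y$), conclude that $F^\infty_{W,\spincs}$ is a grading-preserving, $H_1$-equivariant isomorphism, and then read off the equality of $d$-invariants by chasing the commutative diagram comparing $\HFcirc(Y,\spinct)$ with $\HFcirc(Y',\spinct')$.

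First I would verify the hypotheses of Theorem~\ref{thm:negdefcob}. Since $W$ is a rational homology cobordism, the maps $H_1(Y;\Q)\to H_1(W;\Q)$ and $H_1(Y';\Q)\to H_1(W;\Q)$ are isomorphisms, so hypothesis~(1) holds. The rational homology equivalence also forces $H_2(W;\Q)$ to be generated by the image of $H_2(\partial W;\Q)$, so every $a\in H_2(W)$ has a nonzero integer multiple in $i_*(H_2(\partial W))$ and hence $a\cdot a=0$; this makes hypothesis~(2) vacuous. The kernels $V_W\subset H_1^T(Y)$ and $V'_W\subset H_1^T(Y')$ appearing in the theorem are trivial, being kernels of rational isomorphisms between torsion-free abelian groups, so the conclusion of Theorem~\ref{thm:negdefcob} reduces to the statement that $F^\infty_{W,\spincs}$ is an isomorphism. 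Applying the identical argument to $\bar W$ shows that $F^\infty_{\bar W}$ is an isomorphism as well.

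Next, the grading shift $\tfrac14(c_1(\spincs)^2-2\chi(W)-3\sigma(W))$ vanishes, since $\chi(W)=\sigma(W)=0$ for a rational homology cobordism and the rational intersection form is trivial so $c_1(\spincs)^2=0$; hence $F^\infty_W$ preserves absolute grading. The cobordism map intertwines the $H_1$-actions via the maps $H_1(Y)\to H_1(W)\leftarrow H_1(Y')$. Using that $\HFi$ is torsion-free, that $i_*(V\otimes\Q)=i'_*(V'\otimes\Q)$, and Lemma~\ref{lemma:samerank} to adjust $V$ and $V'$ within their rational spans, one checks that $F^\infty_W$ restricts to grading-preserving isomorphisms $\KK^V(\HFi(Y,\spinct))\xrightarrow{\cong}\KK^{V'}(\HFi(Y',\spinct'))$ and $\QQ^V(\HFi(Y,\spinct))\xrightarrow{\cong}\QQ^{V'}(\HFi(Y',\spinct'))$, and consequently also on $\QK^V$ and $\KQ^V$ of $\HFi$.

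Finally, from the commutative diagram of $\HFcirc$-sequences under the cobordism maps, $F^\infty_W$ carries $I^-(Y,\spinct)$ into $I^-(Y',\spinct')$, giving an induced map $\QK^V(I^-(Y,\spinct))\to\QK^{V'}(I^-(Y',\spinct'))$ that fits into a commutative square with the isomorphism $\QK^V(\HFi(Y,\spinct))\xrightarrow{\cong}\QK^{V'}(\HFi(Y',\spinct'))$. An element of $\QK^V(I^-(Y,\spinct))$ in grading $k$ with nonzero image in $\QK^V(\HFi(Y,\spinct))$ is then carried to an element in the same grading of $\QK^{V'}(I^-(Y',\spinct'))$ with nonzero image in $\QK^{V'}(\HFi(Y',\spinct'))$. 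Using the description $d^-(Y,\spinct,V)=d(Y,\spinct,V)-2$ from the previous section, this yields $d(Y,\spinct,V)\le d(Y',\spinct',V')$; applying the same argument to $\bar W$ reverses the inequality, so the two $d$-invariants agree. The equality of $d^*$-invariants then follows either by rerunning the argument with $\KQ^V$ in place of $\QK^V$, or by appealing to Proposition~\ref{prop:duality} applied to the reversed cobordism $-W$ (which is still a rational homology cobordism). The main technical point is the intermediate step: aligning the $H_1$-actions on $\HFi(Y,\spinct)$ and $\HFi(Y',\spinct')$ via $F^\infty_W$ when $V$ and $V'$ only correspond rationally, which relies on the torsion-freeness of $\HFi$ together with Lemma~\ref{lemma:samerank}.
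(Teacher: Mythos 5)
Your argument is correct and follows the paper's proof in all essentials: apply Theorem~\ref{thm:negdefcob} (whose kernels are trivial here) to get a grading-preserving, $H_1$-equivariant isomorphism $F^\infty_{W,\spincs}$, note that $c_1^2(\spincs)=\chi(W)=\sigma(W)=0$ so there is no grading shift, and chase the induced commutative squares for $\QK^V$ and $\KQ^V$ (using Lemma~\ref{lemma:samerank} and torsion-freeness of $\HFi$ to match $V$ with $V'$) to get one inequality, then reverse. The only deviations are cosmetic and harmless: you obtain the reverse inequality by reading the orientation-reversed cobordism as going from $Y'$ to $Y$, and you phrase the chase via $I^-$ and $d^- = d-2$, whereas the paper runs $-W$ as a cobordism from $-Y$ to $-Y'$ and closes with Proposition~\ref{prop:duality}, using the $J^+$/$\HFp$ diagrams.
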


\begin{proof}
By Theorem \ref{thm:negdefcob}, the map $F_{W, \spincs} \co \HFi(Y, \spinct) \to \HFi(Y', \spinct')$ is an isomorphism. Moreover, the action of $V$ on $\HF^\circ(Y, \spinct)$ corresponds to the action of $V'$ on $\HF^\circ(Y',\spinct')$. As a result, we obtain commutative diagrams
\begin{equation} \label{eq:homcobQKV}
\xymatrix@C=0.6in{
\QK^V (\HFi(Y, \spinct)) \ar[r]^{\QK^V(\pi)} \ar[d]^{\QK^V(F^\infty_{W, \spincs_W})}_{\cong}  & \QQ (J^+(Y, \spinct, V)) \ar[d]^{\QK^V(F^+_{W, \spincs_W})} \\
\QK^{V'} (\HFi(Y', \spinct')) \ar[r]^{\QK^{V'}(\pi)} & \QQ(J^+(Y', \spinct',V'))
}
\end{equation}
and
\begin{equation} \label{eq:homcobKQV}
\xymatrix@C=0.6in{
\KQ^V (\HFi(Y, \spinct)) \ar[r]^{\KQ^V(\pi)} \ar[d]^{\KQ^V(F^\infty_{W, \spincs_W})}_{\cong}  & \KQ^V (I^+(Y, \spinct)) \ar[d]^{\KQ^V(F^+_{W, \spincs_W})} \\
\KQ^{V'} (\HFi(Y', \spinct')) \ar[r]^{\KQ^V(\pi)} & \KQ^{V'}(I^+(Y', \spinct',V')).
}
\end{equation}
Note that $c_1^2(\spincs) = \chi(W) = \sigma(W) = 0$, so each of the vertical maps shifts grading by $0$. An adaptation of the usual argument (compare \cite[proof of Theorem 9.6]{oz:boundary}) then says that
\[
d(Y, \spinct, V) \le d(Y', \spinct', V') \quad \text{and} \quad d^*(Y, \spinct, V) \le d^*(Y', \spinct', V').
\]
We may apply the same argument to $-W$, viewed as a negative semidefinite cobordism from $-Y$ to $-Y'$, to see that
\[
d(-Y, \spinct, V) \le d(-Y', \spinct', V') \quad \text{and} \quad d^*(-Y, \spinct, V) \le d^*(-Y', \spinct', V').
\]
Proposition \ref{prop:duality} then yields the desired result.
\end{proof}

\begin{corollary} \label{cor:QHS1xB3}
Let $Y$ be a closed, oriented $3$-manifold with standard $\HFi$, and let $X$ be a $4$-manifold bounded by $Y$ with $b_1(X) = b_1(Y)$ and $b_2(X)=0$. Then for any spin$^c$ structure $\spinct$ on $Y$ that extends over $X$, the correction terms of $(Y, \spinct)$ are the same as those of $\conn^{b_1(Y)} S^1 \times S^2$; i.e., for each $V \subset H_1^T(Y)$, we have
\begin{equation} \label{eq:QHS1xB3}
d(Y, \spinct, V) = \frac{n}{2} - \rank V \quad \text{and} \quad d^*(Y, \spinct, V) = -\frac{n}{2} + \rank V.
\end{equation}
\end{corollary}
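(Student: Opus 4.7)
The plan is to construct a rational homology cobordism $W$ from $Y$ to $S_n := \conn^n S^1 \times S^2$ (where $n = b_1(Y)$), carrying a spin$^c$ structure that restricts to $\spinct$ on $Y$ and to the unique torsion spin$^c$ structure $\spincs_0$ on $S_n$, and then to apply Proposition \ref{prop:QHcob} together with the computation in Example \ref{ex:S1xS2}.

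For the construction, I would pick loops $\gamma_1, \dots, \gamma_n$ in the interior of $X$ whose classes form a basis of $H_1^T(X)$ (possible since $b_1(X) = n$), and arrange them into an embedded wedge of circles based at a point. Let $N$ be a closed tubular neighborhood; since $X$ is oriented, $N \cong \natural^n (S^1 \times B^3)$ and $\partial N \cong S_n$. Set $W := X \setminus \text{int}(N)$, a cobordism from $Y$ to $S_n$. The hypothesized extension $\spincs_X$ of $\spinct$ over $X$ restricts to a spin$^c$ structure $\spincs_W$ on $W$ with $\spincs_W|_Y = \spinct$; because $b_2(X) = 0$, the group $H^2(X;\Z)$ is pure torsion, so $c_1(\spincs_W)$ restricts to zero in the torsion-free group $H^2(S_n;\Z)$, forcing $\spincs_W|_{\partial N} = \spincs_0$.

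The technical heart of the argument is verifying that $W$ is a rational homology cobordism. I would use the long exact sequence of $(X, Y)$ together with Poincar\'e--Lefschetz duality to show (from $b_1(X) = b_1(Y) = n$ and $b_2(X) = 0$) that $H_k(X;\Q)$ has ranks $1, n, 0, 0, 0$ for $k = 0, \dots, 4$---matching those of $N$. The key point here is that $H_3(Y;\Q) \to H_3(X;\Q)$ vanishes (since $[Y] = \partial[X, Y]$), which combined with duality forces $H_3(X;\Q) = 0$ and $H_1(X,Y;\Q) = 0$. Since the inclusion $N \hookrightarrow X$ is an isomorphism on $H_1$ by construction, it is then a rational homology equivalence, so $H_*(X, N;\Q) = 0$. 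Excision gives $H_*(W, \partial N;\Q) = 0$, and Lefschetz duality on $W$ yields $H_*(W, Y;\Q) = 0$ as well.

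Finally, Proposition \ref{prop:QHcob} applies. For any subspace $V \subset H_1^T(Y)$, choose $V' \subset H_1^T(S_n)$ of the same rank with $i'_*(V' \otimes \Q) = i_*(V \otimes \Q)$ inside $H_1(W;\Q)$; this is possible because both inclusion-induced maps on rational $H_1$ are isomorphisms, and any such choice works by Lemma \ref{lemma:samerank}. Proposition \ref{prop:QHcob} then gives $d(Y,\spinct,V) = d(S_n, \spincs_0, V')$ and $d^*(Y,\spinct,V) = d^*(S_n, \spincs_0, V')$, and by Example \ref{ex:S1xS2} these equal $\tfrac{n}{2} - \rank V$ and $-\tfrac{n}{2} + \rank V$ respectively. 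The main obstacle is the rational homology computation---specifically the vanishing of $H_3(X;\Q)$ and the resulting collapse of the $(X, N)$ long exact sequence---and everything else is formal from the machinery already established.
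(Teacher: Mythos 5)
Your proof is correct and follows essentially the same route as the paper: delete a neighborhood of a bouquet of circles representing a basis for $H_1(X;\Q)$ to obtain a rational homology cobordism from $\conn^{b_1(Y)} S^1 \times S^2$ to $Y$, then apply Proposition~\ref{prop:QHcob} together with Example~\ref{ex:S1xS2}. You simply spell out the homological verification and the spin$^c$ restriction argument that the paper leaves implicit.
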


\begin{proof}
Deleting a neighborhood of a bouquet of circles representing a basis for $H_1(X;\Q)$ gives a rational homology cobordism between $\conn^{b_1(Y)} S^1 \times S^2$ and $Y$; apply Proposition \ref{prop:QHcob}.
\end{proof}

More generally, the intermediate $d$ invariants of $Y$ can provide more subtle information about the intersection forms of negative-semidefinite $4$-manifolds bounded by $Y$.

\begin{theorem} \label{thm:negdef}
Let $Y$ be a closed, oriented 3-manifold with standard $\HFi$, equipped with a torsion spin$^c$ structure $\spinct$. Let $X$ be a negative-semidefinite $4$-manifold with $\partial X = Y$. Let $V_0 \subset H_1^T(Y)$ be the kernel of the map $H_1^T(Y) \to H_1^T(W)$ induced by inclusion. Then for any spin$^c$ structure $\spincs$ on $X$ whose restriction to $Y$ is $\spinct$, and any subspace $V \subset H_1^T(Y)$ that contains $V_0$, we have
\begin{equation} \label{eq:negdefdV}
c_1^2(\spincs) + b_2^-(X) \le 4d(Y, \spinct, V) - 2b_1(Y) + 4 \rank V,
\end{equation}
In particular, for $V = H_1^T(Y)$, we have
\begin{equation} \label{eq:negdefdbot}
c_1^2(\spincs) + b_2^-(X) \le 4\dbot(Y, \spinct) + 2b_1(Y).
\end{equation}
\end{theorem}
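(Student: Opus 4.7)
The plan is to adapt the proof of \cite[Theorem 9.6]{oz:boundary}, which handles the special case $V_0 = 0$ (rational homology sphere $Y$), to our generalized setting. I would first reduce to $V = V_0$: for any $V \supsetneq V_0$, iteratively applying Proposition \ref{prop:rank} along an ascending chain $V_0 \subsetneq V_1 \subsetneq \cdots \subsetneq V$ yields $d(Y, \spinct, V) \ge d(Y, \spinct, V_0) - (\rank V - \rank V_0)$, which combined with the $+4\rank V$ term on the right side of the stated bound reduces the general case to $V = V_0$.

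Next, form $W = X \setminus \operatorname{int}(B^4)$, a cobordism from $S^3$ to $Y$, and extend $\spincs$ across $W$. To apply Theorem \ref{thm:negdefcob}, the required hypothesis $H_1(Y;\Q) \twoheadrightarrow H_1(W;\Q) = H_1(X;\Q)$ is equivalent, via the long exact sequence of the pair $(X, Y)$ and Lefschetz duality, to $b_3(X) = 0$. Arranging this condition for general $X$---possibly by modifying $X$ through handle attachments in its interior that preserve $\partial X = Y$ along with the quantities $c_1^2(\spincs)$ and $b_2^-(X)$---is the main technical step and the principal obstacle I expect. With this in place, Theorem \ref{thm:negdefcob} identifies $F^\infty_{W,\spincs}$ with an isomorphism $\HFi(S^3) \xrightarrow{\cong} \KK^{V_0}(\HFi(Y, \spinct))$; composing with $\pi_*$ and the quotient by the $H_1^T(Y)/V_0$ action then produces a surjection $\HFp(S^3) \twoheadrightarrow \QQ(J^+(Y, \spinct, V_0))$.

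Finally, the grading argument: $d(Y, \spinct, V_0)$ is by definition the minimum grading of a nontorsion element in $\QQ(J^+(Y, \spinct, V_0))$, which via the above surjection has a preimage in $\HFp(S^3)$ at grading $d(Y, \spinct, V_0) - \Delta$, where
\[
\Delta = \frac{c_1^2(\spincs) - 2\chi(W) - 3\sigma(W)}{4}
\]
is the grading shift of $F^\infty_{W, \spincs}$. Since $\HFp(S^3)$ is concentrated in nonnegative gradings, this forces $\Delta \le d(Y, \spinct, V_0)$. Substituting $\chi(W) = \chi(X) - 1 = -b_1(X) + b_2^-(X)$ (using $b_3(X) = 0$), $\sigma(W) = -b_2^-(X)$, and $\rank V_0 = b_1(Y) - b_1(X)$ then yields the claimed bound after routine arithmetic simplification.
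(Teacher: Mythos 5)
Your step (1) --- the reduction to $V=V_0$ via Proposition~\ref{prop:rank} --- is correct, and your grading-shift bookkeeping at the end is the standard argument. The genuine gap is in the middle: the cobordism $W=X\setminus\operatorname{int}(B^4)$ from $S^3$ to $Y$ does not satisfy the hypotheses of Theorem~\ref{thm:negdefcob} except in trivial cases, and your diagnosis of what needs fixing is aimed at the wrong hypothesis. Theorem~\ref{thm:negdefcob} requires rational surjectivity of $H_1$ from \emph{both} ends of the cobordism; the condition you checked ($b_3(X)=0$, i.e.\ surjectivity from the $Y$ end) does nothing for the incoming end, where $H_1(S^3;\Q)=0\to H_1(W;\Q)\cong H_1(X;\Q)$ is surjective only if $b_1(X;\Q)=0$. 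Relatedly, the asserted conclusion $\HFi(S^3)\xrightarrow{\cong}\KK^{V_0}(\HFi(Y,\spinct))$ is impossible on rank grounds whenever $\rank V_0<b_1(Y)$: since $\HFi(Y,\spinct)$ is standard, $\KK^{V_0}(\HFi(Y,\spinct))$ has rank $2^{\,b_1(Y)-\rank V_0}$ over $\Z[U,U^{-1}]$, while $\HFi(S^3)$ has rank $1$. The paper's construction avoids this by \emph{removing a neighborhood of a bouquet of $n$ circles} in $X$ representing the image of $H_1^T(Y)\to H_1^T(X)$ (where $n=b_1(Y)-\rank V_0$), and surgering out only $b_1(X)-n$ circles representing a basis of $H_1(X,Y)$; the result is a cobordism from $S_n=\conn^n S^1\times S^2$ to $Y$ satisfying both surjectivity conditions, with kernel exactly $V_0$ at the $Y$ end, and the grading comparison then uses the intermediate invariants $d(S_n,\spincs_0,\overline V)=\tfrac n2-\rank\overline V$ from Example~\ref{ex:S1xS2} rather than the nonnegativity of gradings in $\HFp(S^3)$.

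Note also that your proposed rescue --- interior handle attachments/surgeries to arrange the missing condition --- cannot salvage the $S^3$ route. To make $H_1(S^3;\Q)\to H_1(W;\Q)$ surjective you would have to kill the image of $H_1(Y;\Q)$ in $H_1(X;\Q)$ as well, and doing so enlarges the kernel $V_0$ to all of $H_1^T(Y)$: at best you would recover only the $\dbot$ inequality \eqref{eq:negdefdbot}, not \eqref{eq:negdefdV} for subspaces $V_0\subseteq V\subsetneq H_1^T(Y)$. For instance, with $X=S^1\times B^3$ and $Y=S^1\times S^2$ the theorem with $V=V_0=\{0\}$ is a statement about $\dtop(Y,\spinct)$, which no cobordism from $S^3$ can detect; and these intermediate cases are precisely what Corollary~\ref{cor:intform} uses. (Surgering circles that come from the boundary can also create square-zero classes interfering with hypothesis (2) of Theorem~\ref{thm:negdefcob}, which is why the paper surgers only classes in $H_1(X,Y)$ and removes, rather than surgers, the boundary-image circles.)
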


\begin{proof}
Let $n$ be the rank of the map $H_1(Y) \to H_1(X)$, so that $\rank V_0 = b_1(Y)-n$. Let $\Gamma \subset X$ be a bouquet of $n$ circles representing a basis for the image of $H_1^T(Y) \to H_1^T(W)$. Let $W$ be the $4$-manifold obtained from $X \minus \nbd(\Gamma)$ by surgering out $b_1(X)-n$ circles representing a basis for $H_1(X,Y)$. Then $W$ is a cobordism from $S_n  = \conn^n S^1 \times S^2$ to $Y$ satisfying the hypotheses of Theorem \ref{thm:negdefcob}. The kernel of $H_1^T(Y) \to H_1^T(W)$ is $V_0$. It is not hard to verify that
\[
\chi(W) = b_1(Y) - n + b_2^-(X) \quad \text{and} \quad \sigma(W) = -b_2^-(X).
\]
The spin$^c$ structure $\spincs$ induces a spin$^c$ structure on $W$, which we denote by $\spincs_W$, and $c_1^2(\spincs_W) = c_1^2(\spincs)$, and the restriction of $\spincs_W$ to $S_n$ is the unique torsion spin$^c$ structure $\spincs_0$. Thus, the maps $F_{W, \spincs_W}^\circ$ shift grading by
\[
\frac{c_1^2(\spincs) - 2b_1(Y) + 2n + b_2^-(X)}{4}.
\]

By Theorem \ref{thm:negdefcob}, the following diagram commutes:
\begin{equation} \label{eq:negdefKV0}
\xymatrix@C0.5in{
\HFi(S_n, \spincs_0) \ar[r]^{\pi} \ar[d]^{F^\infty_{W, \spincs_W}}_{\cong}  & \HFp(S_n, \spincs_0) \ar[d]^{F^+_{W, \spincs_W}} \\
\KK^{V_0} \HFi(Y, \spinct) \ar[r]^{\KK^{V_0}(\pi)} & \KK^{V_0} \HFp(Y, \spinct)
}
\end{equation}
The vertical maps respect the actions of $\Lambda^* H_1(S_n)$ and $\Lambda^*((H_1^T(Y)/V_0)/\Tors)$, which are isomorphic, on the top and bottom rows respectively. As a result, for any subspace $V \subset H_1^T(Y)$ containing $V_0$, if we let $\overline{V}$ be the subspace of $H_1^T(S_n)$ corresponding to $V$ (which is isomorphic to $V/V_0$), we obtain a commutative diagram
\begin{equation} \label{eq:negdefQKV}
\xymatrix@C0.5in{
\QK^{\overline {V}} \HFi(S_n, \spincs_0) \ar[r]^{\QK^{\overline{V}}(\pi)} \ar[d]^{F^\infty_{W, \spincs_W}}_{\cong}  & \QQ (J^+(S_n, \spincs_0, \overline {V})) \ar[d]^{F^+_{W, \spincs_W}} \\
\QK^{V} \HFi(Y, \spinct) \ar[r]^{\QK^V(\pi)} & \QQ(J^+(Y, \spinct,V)).
}
\end{equation}
just as above, whence
\[
d(Y, \spinct, V) \ge d(S_n, \spincs_0, \overline{V}) + \frac{c_1^2(\spincs) - 2b_1(Y) + 2n + b_2^-(X)}{4}.
\]
Example \ref{ex:S1xS2}, combined with the fact that $\rank (\overline{V}) = k - \rank V_0 = k - b_1(Y) +n$, implies that
\[
d(S_n, \spincs_0, \overline{V}) = \frac{n}{2} - \rank (\overline{V} ) = -\frac{n}{2} -k +b_1(Y),
\]
from which \eqref{eq:negdefdV} follows.
\end{proof}

Theorem \ref{thm:negdef} is particularly useful when $H_1(Y)$ is torsion-free, in light of Elkies' theorem characterizing the $\Z^n$ lattice \cite{elkies:lattice}.

\begin{corollary} \label{cor:intform}
Let $Y$ be a closed, oriented 3-manifold with standard $\HFi$ and with $H_1(Y) = \Z^n$, and let $\spinct_0$ be the unique torsion spin$^c$ structure on $Y$. Suppose $X$ is a negative-semidefinite $4$-manifold bounded by $Y$, with $H_1(X)$ torsion-free, and let $V = \ker(H_1(Y) \to H_1(X))$. Then
\[
d(Y, \spinct_0, V) \ge \frac{b_1(Y)}{2} - \rank(V).
\]
Moreover, if
\[
d(Y, \spinct_0, V) = \frac{b_1(Y)}{2} - \rank(V),
\]
then the intersection form on $H_2(X)/H_2(Y)$ is diagonalizable.
\end{corollary}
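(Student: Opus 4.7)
The plan is to combine Theorem~\ref{thm:negdef} with Elkies' characterization of the standard negative-definite diagonal unimodular lattice. Since both $H_1(Y) = \Z^n$ and $H_1(X)$ are torsion-free, the subspace $V = \ker(H_1(Y) \to H_1(X))$ coincides with the subspace $V_0$ of Theorem~\ref{thm:negdef}, so that theorem applies directly and gives, for every spin$^c$ structure $\spincs$ on $X$ restricting to $\spinct_0$, the inequality
\[
c_1^2(\spincs) + b_2^-(X) \le 4 d(Y, \spinct_0, V) - 2 b_1(Y) + 4 \rank V.
\]

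Next I would set up the algebraic framework needed to invoke Elkies. Using the long exact sequence of $(X,Y)$ together with Poincar\'e--Lefschetz duality and the hypothesis that $H_1(X)$ is torsion-free (so $H_2(X,Y) \cong \hom(H_2(X),\Z)$ is free), one checks that $L := H_2(X)/i_*H_2(Y)$ is a torsion-free abelian group of rank $b_2^-(X)$ on which the intersection form of $X$ induces a negative-definite unimodular pairing; unimodularity follows because the induced map $L \to L^* = \hom(L,\Z)$ is a surjection between free abelian groups of the same rank $b_2^-(X)$. Since $H^2(Y;\Z)$ is torsion-free we have $c_1(\spinct_0) = 0$, so for any extension $\spincs$ the class $c_1(\spincs)$ pairs trivially with $i_*H_2(Y)$ and descends to a characteristic vector $\bar c_1(\spincs) \in L$ whose self-pairing equals $c_1^2(\spincs)$. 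Moreover, any two extensions of $\spinct_0$ differ by an element of $\ker(H^2(X) \to H^2(Y))$, and this kernel maps surjectively onto $L$ modulo torsion (via Poincar\'e--Lefschetz duality applied to the appropriate piece of the long exact sequence), so every characteristic vector of $L$ is realized as $\bar c_1(\spincs)$ for some such $\spincs$.

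Elkies' theorem~\cite{elkies:lattice} guarantees the existence of a characteristic vector $c$ in the rank-$b_2^-(X)$ lattice $L$ with $c^2 \ge -b_2^-(X)$. Realizing $c$ as $\bar c_1(\spincs)$ for some extension and substituting into the inequality above yields
\[
-b_2^-(X) \le 4 d(Y, \spinct_0, V) - 2 b_1(Y) + 4 \rank V - b_2^-(X),
\]
which simplifies to $d(Y, \spinct_0, V) \ge b_1(Y)/2 - \rank V$. If equality holds, then every characteristic vector $c$ of $L$ must satisfy $c^2 \le -b_2^-(X)$; combined with Elkies' existence bound, the maximum of $c^2$ over characteristic vectors is exactly $-b_2^-(X)$, and Elkies' theorem then forces $L$ to be isomorphic to the standard negative-definite diagonal lattice of rank $b_2^-(X)$, which is precisely diagonalizability of the intersection form on $H_2(X)/H_2(Y)$.

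The main technical obstacle is the second paragraph: verifying unimodularity of the induced pairing on $L$, and confirming surjectivity of $\spincs \mapsto \bar c_1(\spincs)$ onto the characteristic vectors of $L$. Both reduce to careful application of the long exact sequence of the pair $(X,Y)$ together with Poincar\'e--Lefschetz duality and the torsion-freeness hypotheses on $H_1(X)$ and $H_1(Y)$; the bookkeeping required to identify $\ker(H^2(X) \to H^2(Y))/\Tors$ with $L$ in the right way is where the proof needs to be most careful.
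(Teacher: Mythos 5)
Your proposal is correct and follows essentially the same route as the paper: identify $V$ with $V_0$, show the intersection pairing on $H_2(X)/i_*H_2(Y)$ is unimodular, negative definite of rank $b_2^-(X)$ with characteristic vectors corresponding to \spinc\ structures extending $\spinct_0$, and combine Theorem~\ref{thm:negdef} with Elkies' theorem. The lattice bookkeeping you flag as the technical obstacle is handled in the paper exactly as you suggest, via the long exact sequence of $(X,Y)$ and the splitting coming from torsion-freeness of $H^2(Y)$ and $H_1(X)$.
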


\begin{proof}
Consider the long exact sequence in cohomology for the pair $(X,Y)$:
\[
\cdots \to H^1(Y) \xrightarrow{\delta} H^2(X,Y) \xrightarrow{j^*} H^2(X) \xrightarrow{i^*} H^2(Y) \to \cdots
\]
As noted above, the intersection form on $H_2(X) \cong H^2(X,Y)$ descends to a form on $H_2(X)/i_*(H_2(Y)) \cong H^2(X,Y) / \delta(H^1(Y))$, which is a free abelian group since it injects into $H^2(X)$. Moreover, the short exact sequence
\[
0 \to H^2(X,Y)/\delta(H^1(Y)) \xrightarrow{j^*} H^2(X) \xrightarrow{i^*} V \to 0,
\]
splits since $H^2(Y)$, hence $V$, is torsion-free. With respect to suitable bases, the map $j^*$ is given by a matrix for the intersection form on $H^2(X)/\delta(H^1(Y))$; it follows that this intersection form is unimodular, with rank equal to $b_2^-(X)$.

Characteristic vectors for the intersection form on $H^2(X)/\delta(H^1(Y))$ are in one-to-one correspondence with spin$^c$ structures on $X$ that restrict to $\spinct_0$ on $Y$. By Elkies' theorem \cite{elkies:lattice} (see also \cite[Theorem 9.5]{oz:boundary}), we have
\[
\max \{c_1^2(\spincs) \mid \spincs \in \Spin^c(X), \spincs|_Y = \spinct_0\} \ge 0,
\]
with equality holding if and only if the intersection form is diagonalizable. The corollary then follows from Theorem \ref{thm:negdef}.
\end{proof}

As an example, suppose $X$ is a negative-semidefinite $4$-manifold bounded by the manifold $Y = S^1 \times S^2 \conn S^3_0(K)$ from Example \ref{ex:hyp}, where $K$ is the right-handed trefoil. Corollary \ref{cor:intform} says that $\ker(H_1(Y) \to H_1^T(X))$ must be either all of $H_1(Y)$ or the subgroup generated by $\beta$; in either case, the intersection form on $X$ must be diagonalizable.

\section{An application to link concordance}\label{S:links}

As noted in the introduction, the $d$ invariants for rational homology spheres have been a useful tool in the study of knot concordance, and the goal of this section is to prove analogous results for links using the generalized $d$ invariants.

Let $L=(L_1,\ldots,L_n)$ be a link in $S^3$, and denote by $\Sigma(L)$ the double cover of $S^3$ branched along $L$. We say that $L$ is \emph{(smoothly) slice} if there exist $n$ disjoint, smoothly embedded disks in $B^4$ with boundary $L$. Two links $L,L'$ are \emph{(smoothly) concordant} if there exist $n$ disjoint, smoothly embedded annuli in $S^3 \times I$ with boundary $L \times \{0\} \cup L' \times \{1\}$. Thus, $L$ is slice if and only if it is concordant to the unlink.

Denote by $\Sigma(L)$ the branched double cover of $S^3$ branched over $L$. The \emph{nullity} of $L$ is defined to be $\eta(L) = 1+ b_1(\Sigma(L))$~\cite{murasugi:numerical}. It is known~\cite{kauffman-taylor:links} that $\eta(L) \leq n$ and that $\eta$ is a concordance invariant; in particular, if $L$ is a slice link, then $\eta(L)=n$. Indeed, we have the following lemma:

\begin{lemma}[{Kauffman-Taylor~\cite[Theorem 2.6]{kauffman-taylor:links}}]
Suppose $\Delta \subset B^4$ is a union of slice disks for $L \subset S^3$. Then $\Sigma(\Delta)$ is a rational homology $\natural^n S^1 \times B^3$, and the inclusion $\Sigma(L) \to \Sigma(\Delta)$ induces an isomorphism on rational homology.
\end{lemma}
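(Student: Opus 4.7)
My plan is to compute $H_*(W;\Q)$ where $W := \Sigma(\Delta)$ by decomposing the branched cover along the branch locus, and then to analyze the inclusion $\Sigma(L) \hookrightarrow W$. Write $X_0 := B^4 \setminus \mathrm{int}\,\nu(\Delta)$ and let $\tilde X_0 \to X_0$ be the connected double cover classified by the homomorphism $\phi\co H_1(X_0;\Z) \to \Z/2$ sending each meridian to $1$; then $W = \tilde X_0 \cup \bigsqcup^n D^4$, where the $n$ four-balls are the branched double covers of the tubes $\nu(D_i) \cong D^2 \times D^2$, attached along their boundary tori. Via the long exact sequence of $(B^4, X_0)$, excision, and the Thom isomorphism for the trivial normal disk bundle, one finds $H_*(X_0;\Z) \cong H_*(\bigvee^n S^1;\Z)$ with $H_1(X_0)$ freely generated by the meridians $\mu_1, \dots, \mu_n$.

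Since $2$ is invertible in $\Q$, the Cartan--Leray spectral sequence for the double cover collapses, giving a $\Q$-eigenspace splitting
\[
H_*(\tilde X_0;\Q) \cong H_*(X_0;\Q) \oplus H_*(X_0;\Q^-),
\]
where $\Q^-$ is the rank-one sign local system determined by $\phi$. The invariant summand is $\Q$ in degree $0$ and $\Q^n$ in degree $1$. For the anti-invariant summand, $H_0(X_0;\Q^-) = 0$ since the meridians act by $-1$, killing coinvariants; combined with $\chi(X_0;\Q^-) = \chi(X_0) = 1 - n$ and Poincar\'e--Lefschetz duality (which forces vanishing in the top degrees), this pins down $H_*(X_0;\Q^-) = \Q^{n-1}$ concentrated in degree $1$. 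Mayer--Vietoris for $W = \tilde X_0 \cup \bigsqcup^n D^4$ then kills the invariant $\Q^n$ (each lifted meridian bounds in the corresponding $D^4$), yielding $H_0(W;\Q) = \Q$, $H_1(W;\Q) = \Q^{n-1}$, and $H_k(W;\Q) = 0$ for $k \ge 2$. This matches the rational homology of the relevant boundary-connected sum of $S^1 \times B^3$'s, and is consistent with the branched-cover formula $\chi(W) = 2\chi(B^4) - \chi(\Delta) = 2 - n$.

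For the inclusion-induced map, the vanishing of $H_3(W;\Q)$ together with Poincar\'e--Lefschetz duality gives $H_1(W, \partial W;\Q) \cong H^3(W;\Q) = 0$, so the long exact sequence of $(W, \partial W)$ shows that $H_1(\Sigma(L);\Q) \to H_1(W;\Q)$ is surjective. Since $\dim H_1(\Sigma(L);\Q) = \eta(L) - 1 = n - 1 = \dim H_1(W;\Q)$, it must be an isomorphism, and Poincar\'e--Lefschetz duality in the remaining degrees supplies the rest. The main obstacle is the twisted-coefficient computation of $H_*(X_0;\Q^-)$: while $H_0$ and the Euler characteristic give strong constraints, separating $b_1$ from $b_2$ in the twisted setting requires duality or an explicit CW model for $X_0$ compatible with $\phi$. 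The cleanest alternative is to bypass this step entirely by invoking a Smith--Richardson-type long exact sequence for the branched cover $W \to B^4$ directly, expressing the anti-invariant part of $H_*(W;\Q)$ in terms of $H_*(B^4, \Delta;\Q)$, which is easily computed from the LES of the pair.
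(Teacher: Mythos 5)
The paper itself offers no proof of this lemma---it is quoted directly from Kauffman--Taylor---so the only question is whether your argument stands on its own, and as written it does not: the step you yourself flag as ``the main obstacle'' is the heart of the theorem, not a technicality. Your untwisted computations are fine ($H_*(X_0;\Z)\cong H_*(\vee^n S^1)$ with meridian generators, the decomposition $W=\tilde X_0\cup\bigsqcup^n D^4$, the rational eigenspace splitting), but the vanishing of $H_k(X_0;\Q^-)$ for $k\ge 2$ does not follow from ``$H_0^-=0$, the Euler characteristic, and duality in the top degrees.'' Twisted Poincar\'e--Lefschetz duality does give $H_4(X_0;\Q^-)\cong H^0(X_0,\partial X_0;\Q^-)=0$, but $H_3(X_0;\Q^-)\cong H^1(X_0,\partial X_0;\Q^-)$, which has no formal reason to vanish, and duality says nothing at all about $H_2(X_0;\Q^-)$; the Euler characteristic then only yields $b_1^- - b_2^- + b_3^- = n-1$. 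Moreover, no argument using only $H_*(X_0;\Q)$ plus duality can succeed: in the $3$--dimensional analogue every $n$--component link exterior has the same untwisted rational homology, yet the corresponding twisted group (equivalently $b_1(\Sigma(L))$) varies from link to link (Hopf link versus the two-component unlink). So genuine input about the double cover of the disk complement is required, and an ``explicit CW model for $X_0$ compatible with $\phi$'' is not available in general either.

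The repair you gesture at in your final sentence is the right idea, but it must be carried out, and it is cleanest with mod~$2$ coefficients rather than the $\Q^-$--eigenspace: since $H^k(X_0;\Z/2)=0$ for $k\ge 2$, the Gysin (Smith) sequence of the free double cover, $\cdots\to H^k(X_0;\Z/2)\to H^k(\tilde X_0;\Z/2)\to H^k(X_0;\Z/2)\xrightarrow{\cup\phi}H^{k+1}(X_0;\Z/2)\to\cdots$, gives $H^k(\tilde X_0;\Z/2)=0$ for $k\ge 2$; universal coefficients (all groups are finitely generated) then force $b_k(\tilde X_0;\Q)=0$ for $k\ge 2$, and $\chi(\tilde X_0)=2(1-n)$ gives $b_1(\tilde X_0;\Q)=2n-1$. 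With that in hand your Mayer--Vietoris step works (note the gluing regions are solid tori $D^2\times S^1$, not tori, and the $n$ lifted meridians are linearly independent in $H_1(\tilde X_0;\Q)$ because $p_*$ sends them to $2\mu_i$), yielding $H_*(W;\Q)\cong H_*(\natural^{\,n-1}S^1\times B^3;\Q)$. Two further remarks: your count $b_1(W;\Q)=n-1$ is the correct one (consistent with $\eta(L)=n$, and with the knot case, where $\Sigma(\Delta)$ is a rational homology ball), so the exponent $n$ in the printed statement should be read as $n-1$; and your closing ``duality supplies the rest'' cannot produce an isomorphism in degree $2$, where the map is $\Q^{n-1}\to 0$---what is true, and what the rest of the paper actually uses, is the isomorphism on $H_1(\,\cdot\,;\Q)$, which you can obtain without quoting $\eta(L)=n$ at all: once $H_k(W;\Q)=0$ for $k\ge 2$, duality gives $H_2(W,\partial W;\Q)\cong H^2(W;\Q)=0$ and $H_1(W,\partial W;\Q)\cong H^3(W;\Q)=0$, and the long exact sequence of the pair shows directly that $H_1(\Sigma(L);\Q)\to H_1(W;\Q)$ is an isomorphism.
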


Thus, our goal is to use the tools of the previous section to obstruct $\Sigma(L)$ bounding a rational homology $\natural^n S^1 \times B^3$. To begin, we must show that $\HFi(\Sigma(L))$ is standard. By Theorem \ref{thm:Lidman}, it suffices to verify the following lemma, which dates back to Fox~\cite{fox:cyclic}. (Since Fox's paper is difficult to access, we provide the proof.)

\begin{lemma}\label{L:fox}
For any link $L$, the triple cup product on $H^1(\Sigma(L))$ vanishes identically.
\end{lemma}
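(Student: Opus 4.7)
The plan is to exploit the covering involution $\tau\colon \Sigma(L)\to \Sigma(L)$ of the branched double cover $\Sigma(L)\to S^3$, whose orbit space is $S^3$. The triple cup product on $H^1(\Sigma(L);\Z)$ is a trilinear form valued in $H^3(\Sigma(L);\Z)\cong \Z$, and I would show it vanishes rationally by comparing how $\tau^*$ acts on degrees $1$ and $3$, then use torsion-freeness of $H^3$ to pass back to $\Z$-coefficients.

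The first main ingredient is that $\tau$ is orientation-preserving, so $\tau^*$ acts as $+1$ on $H^3(\Sigma(L);\Q)$. This is standard: away from the branch locus $\tau$ is the nontrivial deck transformation of a covering of oriented manifolds, hence orientation-preserving there, and this extends across $\tilde L$ by continuity (or by noting that the local model $z\mapsto -z$ on a transverse disk preserves orientation on the $3$-manifold).

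The second ingredient is that $\tau^*$ acts as $-1$ on $H^1(\Sigma(L);\Q)$. For this I would invoke the standard fact that for a finite group $G$ acting on a reasonable space $X$, $H^*(X/G;\Q)\cong H^*(X;\Q)^G$ (proved via the averaging projection $\tfrac{1}{|G|}\sum_{g}g^*$). Applied to the $\Z/2$-action on $\Sigma(L)$ with quotient $S^3$, this gives $H^1(\Sigma(L);\Q)^\tau \cong H^1(S^3;\Q)=0$. Since the involution on the rational vector space $H^1(\Sigma(L);\Q)$ has no invariant vectors, it must act as $-1$. Justifying this step for a \emph{branched} (i.e.~non-free) action is the subtlest point, but it is a well-known fact in rational equivariant cohomology and needs no more than a citation; alternatively one can decompose $\Sigma(L) = (\Sigma(L)\smallsetminus\tilde L)\cup N(\tilde L)$, apply the invariants-of-orbit-space identification on the honest double cover $\Sigma(L)\smallsetminus\tilde L\to S^3\smallsetminus L$, and note that $\tau$ acts trivially on $H^*$ of a tubular neighborhood of $\tilde L\cong L$ (where $H^1$ is spanned by meridians pulled back from $S^3$).

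Combining the two, for any $\alpha_1,\alpha_2,\alpha_3\in H^1(\Sigma(L);\Z)$ we compute
\[
\tau^*(\alpha_1\cup\alpha_2\cup\alpha_3)=\tau^*\alpha_1\cup\tau^*\alpha_2\cup\tau^*\alpha_3 = -\alpha_1\cup\alpha_2\cup\alpha_3
\]
in $H^3(\Sigma(L);\Q)$, while $\tau^*$ is the identity there; therefore $\alpha_1\cup\alpha_2\cup\alpha_3=0$ rationally, and since $H^3(\Sigma(L);\Z)\cong\Z$ is torsion-free the class vanishes integrally. Evaluating on $[\Sigma(L)]$ gives the claim. The whole argument is really a parity argument on eigenvalues of $\tau^*$ — nothing link-theoretic is needed beyond the existence of the involution — and the only step that requires any care is justifying the $-1$ eigenvalue on $H^1$ in the branched setting.
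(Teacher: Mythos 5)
Your proof is correct and is essentially the paper's argument: the triple cup product is compared with its pullback under the covering involution $\tau$, which preserves the fundamental class but acts by $-1$ on first (co)homology, forcing the product to equal its own negative. The only difference is in how the $-1$ eigenvalue is justified --- the paper cites Fox's transfer-sequence identity $1+\tau_*=0$ on $H_1$ for double branched covers, while you deduce it rationally from $H^1(\Sigma(L);\Q)^{\tau}\cong H^1(S^3;\Q)=0$ --- and both justifications are sound.
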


\begin{proof}
Fox~\cite{fox:cyclic} showed that for a $d$-fold cyclic branched covering space, a generator $\tau$ of the group of covering transformations satisfies $1+ \tau_* + \tau^2_* + \cdots \tau^{d-1}_* =0$ on $H_1$; this is a nice exercise using properties of the transfer sequence.  In particular for $2$-fold covers, $\tau_*$ acts by $-1$.  It follows that for any $\alpha, \beta, \gamma \in H^1(\Sigma(L))$, we have
\begin{align*}
\langle \alpha \cup \beta \cup \gamma, [\Sigma(L)] \rangle & = \langle \alpha \cup \beta \cup \gamma, \tau_*[\Sigma(L)] \rangle\\
&=\tau^*(\langle \alpha \cup \beta \cup \gamma), [\Sigma(L)] \rangle\\
&= -\langle \alpha \cup \beta \cup \gamma, [\Sigma(L)] \rangle
\end{align*}
and so the triple product must vanish.
\end{proof}



According to Turaev~\cite{turaev:montesinos}, the choice of an orientation $o$ on $L$ determines a canonical spin structure $\spincs_o$ on $\Sigma(L)$, so that an orientation gives rise to a parametrization of \spinc\ structures on $\Sigma(L)$ as $\spincs_c = \spincs_o + c$ for $c \in H^2(\Sigma(L))$. In particular, $\spincs_c$ is torsion if and only if $c$ is torsion. By~\cite{donald-owens:links}, for any oriented surface $F \subset B^4$ (connected or not) with boundary $L$, the spin structure $\spincs_o$ extends over $\Sigma(F)$, and therefore $\spincs_c$ extends over $W$ if and only if $c \in \im(j^*\co H^2(\Sigma(F)) \to H^2(\Sigma(L)))$, or equivalently if $\PD(c) \in \ker(j_*\co H_1(\Sigma(L)) \to H_1(\Sigma(F)))$. Thus, we must understand the torsion elements in this kernel.

\begin{lemma} \label{lemma:metabolizer}
Suppose that $Y$ is the boundary of $W$, rational homology $\natural^n S^1 \times B^3$. Let $A$ denote the kernel of the map $T_1(Y) \to H_1(W)$, where $T_1(Y)$ denotes the torsion subgroup of $H_1(Y)$. Then $A$ is a metabolizer for the linking form
\[
T_1(Y) \otimes T_1(Y) \to \Q/\Z;
\]
in particular, we have $\abs{A}^2 = \abs{T_1(Y)}$.
\end{lemma}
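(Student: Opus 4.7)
The plan is to show two things: (i) $A$ is isotropic under the linking form, i.e.\ $A \subseteq A^\perp$; and (ii) $|A|^2 = |T_1(Y)|$. Combined with the non-degeneracy of the linking form --- which forces $|A| \cdot |A^\perp| = |T_1(Y)|$ --- these together yield $|A^\perp| = |A|$, hence $A = A^\perp$, so that $A$ is a metabolizer.

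For step (ii), I would combine the integral long exact sequence of $(W, Y)$ with Poincar\'e--Lefschetz duality and the universal coefficient theorem. Since $W$ is a rational homology $\natural^n S^1 \times B^3$, we have $H_k(W;\Q) = 0$ for $k = 2, 3$, so $H_2(W;\Z)$ and $H_3(W;\Z)$ are pure torsion, and PD combined with UCT gives $H_2(W,Y;\Z) \cong H^2(W;\Z) \cong T_1(W)$ and $H_1(W,Y;\Z) \cong H^3(W;\Z) \cong T_2(W)$. The rational part of the long exact sequence shows that $i_* \colon H_1(Y;\Q) \to H_1(W;\Q)$ is an isomorphism, so $\ker(i_* \colon H_1(Y;\Z) \to H_1(W;\Z))$ is torsion and equals $A$. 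A diagram chase then identifies $A$ with the cokernel of $H_2(W) \to H_2(W,Y)$ and identifies $T_1(Y)/A$ with the image of $T_1(Y) \to T_1(W)$; these two subquotients of $T_1(W)$ are Pontrjagin dual to each other via the $\Q/\Z$-valued torsion linking form on $H_1(W)$, yielding $|A| = |T_1(Y)/A|$ and hence $|A|^2 = |T_1(Y)|$.

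For step (i), I would use the standard geometric formula for the linking form via 4-dimensional intersections. Given $a, b \in A$, pick 2-chains $\alpha, \beta$ in $W$ with $\partial \alpha = a$ and $\partial \beta = b$, and fix $k > 0$ together with 2-chains $\sigma_a, \sigma_b$ in $Y$ satisfying $\partial \sigma_a = ka$ and $\partial \sigma_b = kb$. Then $\Sigma_a = k\alpha - \sigma_a$ and $\Sigma_b = k\beta - \sigma_b$ are closed integral 2-cycles in $W$. A direct computation, pushing the $\sigma$-pieces into the collar of $Y$ and accounting for all boundary contributions, gives
$$\Sigma_a \cdot \Sigma_b \equiv k^2 \, \lk(a,b) \pmod{k^2 \Z},$$
so $\lk(a, b)$ is determined up to an integer by $(\Sigma_a \cdot \Sigma_b)/k^2$. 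But $H_2(W;\Z)$ is pure torsion, so the integer-valued intersection pairing on it is identically zero; in particular $\Sigma_a \cdot \Sigma_b = 0$, giving $\lk(a, b) = 0$ in $\Q/\Z$.

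The main obstacle is step (ii): the cardinality identity requires carefully extracting the Pontrjagin duality between $A$ and $T_1(Y)/A$ as subquotients of $T_1(W)$, which in turn rests on the exactness properties of the long exact sequence of $(W, Y)$ together with PD and UCT for a 4-manifold with boundary. The isotropy in step (i) is then routine once the intersection--linking correspondence is set up: it reduces to the observation that $W$ kills $a$ and $b$ in such a way that the linking pairing lifts to the identically zero integral intersection pairing on the torsion group $H_2(W;\Z)$.
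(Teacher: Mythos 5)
Your proposal is correct in outline, but it takes a genuinely different route from the paper. The paper does not argue directly with the pair $(W,Y)$: it attaches $2$-handles to $W$ along embedded curves representing a basis of $H_1^T(Y)$, producing a rational homology ball $W'$ with rational homology sphere boundary $Y'$, checks that $T_1(Y)\cong T_1(Y')$ compatibly with the linking forms and with the kernel $A$, and then quotes the classical Casson--Gordon lemma for the case $n=0$. You instead redo the Casson--Gordon-type argument from scratch for the manifold with $b_1>0$: isotropy of $A$ via the lift of the linking form to the (identically zero, since $H_2(W;\Q)=0$) integral intersection pairing on $H_2(W)$, and the count $\abs{A}^2=\abs{T_1(Y)}$ via the long exact sequence of $(W,Y)$, Lefschetz duality and the universal coefficient theorem. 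Your route is self-contained and avoids the (mildly fussy) verification that surgering out the free part preserves the torsion, the linking form, and $A$; the paper's route is shorter because it outsources all duality bookkeeping to the known $n=0$ lemma. Two small points of precision in your write-up: the pairing you invoke in step (ii) is not a linking form ``on $H_1(W)$'' (a $4$-manifold with boundary has no intrinsic such form) but the perfect relative linking pairing $T_1(W)\otimes TH_2(W,Y)\to\Q/\Z$ coming from $H_2(W,Y)\cong H^2(W)\cong \mathrm{Ext}(H_1(W),\Z)$, and the Pontrjagin duality between $\coker(H_2(W)\to H_2(W,Y))\cong A$ and $\im(T_1(Y)\to T_1(W))\cong T_1(Y)/A$ should be justified by the adjointness $\lambda_W(i_*t,x)=\pm\lambda_Y(t,\partial x)$ together with nondegeneracy of $\lambda_Y$; with that stated, both of your steps are standard and the argument goes through.
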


\begin{proof}
The lemma is well-known~\cite[Lemma 4.3]{casson-gordon:stanford} in the case when $Y$ is a rational homology sphere and $W$ is a rational homology ball (i.e. $n=0$). In the general case, choose embedded curves representing a basis for $H_1^T(Y)$, and add handles to $W$ along those curves to get a rational homology ball, denoted $W'$. Its boundary is $Y'$, a rational homology sphere.  It is easy to check that $T_1(Y) = T_1(Y')$ and $T_1(W) = T_1(W')$ in such a way that $A$ is isomorphic to $\ker j'_*: T_1(Y') \to H_1(W')$. Moreover, the linking forms of $Y$ and $Y'$ are isomorphic.  By the (usual) result for $n=0$, $A$ is a metabolizer as claimed.
\end{proof}

Assembling these results, we obtain the main theorem of this section, which was well known in the case of knots \cite{jabuka-naik:dcover}.

\begin{theorem}\label{T:link-d}
Suppose $L \subset S^3$ is smoothly slice. Then there is a subgroup $A \subset T_1(\Sigma(L))$ that is a metabolizer for the linking form
\[
\lambda:  T_1(\Sigma(L)) \otimes  T_1(\Sigma(L)) \to \Q/\Z
\]
such that for any \spinc\ structure $\spincs_t = \spincs_o + t$ with $t \in A$, we have $\dbot(\Sigma(L), \spincs_t) = -n/2$ and $\dtop(\Sigma(L), \spincs_t) = n/2$, and hence
\[
d(\Sigma(L),\spincs,V) = \frac{n}{2} - \rank(V) \quad \text{and} \quad d^*(\Sigma(L),\spincs,V) = \frac{n}{2} + \rank(V)
\]
for any subspace $V \subset H_1^T(\Sigma(L))$.
\end{theorem}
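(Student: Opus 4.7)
The plan is to realize the theorem as a direct application of Corollary~\ref{cor:QHS1xB3}, using the branched double cover of slice disks as the cobounding four-manifold. Since $L$ is slice, I would fix a collection $\Delta \subset B^4$ of $n$ disjoint smoothly embedded slice disks with $\partial \Delta = L$ and set $W = \Sigma(\Delta)$. By the Kauffman--Taylor lemma quoted above, $W$ is a rational homology $\natural^n S^1 \times B^3$ bounded by $\Sigma(L)$, and the inclusion $\Sigma(L) \hookrightarrow W$ induces an isomorphism on rational homology; in particular $b_2(W) = 0$ and $b_1(W) = b_1(\Sigma(L))$, so the four-manifold hypotheses of Corollary~\ref{cor:QHS1xB3} are met. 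Simultaneously, Lemma~\ref{L:fox} gives that the triple cup product on $H^1(\Sigma(L))$ vanishes identically, so by Theorem~\ref{thm:Lidman} the group $\HFi(\Sigma(L))$ is standard and all the intermediate correction terms are in fact defined.

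Next, I would set $A := \ker\bigl(T_1(\Sigma(L)) \to H_1(W)\bigr)$. Lemma~\ref{lemma:metabolizer} applied to the pair $(\Sigma(L), W)$ immediately gives that $A$ is a metabolizer for the linking form on $T_1(\Sigma(L))$. The delicate step is to identify $A$ with the set of twists $t$ for which $\spincs_t$ extends over $W$. Combining Turaev's parametrization $\spincs_c = \spincs_o + c$ with the Donald--Owens extension result that the canonical spin structure $\spincs_o$ extends over $\Sigma(\Delta)$, one has that $\spincs_c$ extends over $W$ if and only if $c \in \im\bigl(j^* \colon H^2(W) \to H^2(\Sigma(L))\bigr)$. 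By Poincar\'e--Lefschetz duality this image corresponds to $\PD(c) \in \ker\bigl(j_* \colon H_1(\Sigma(L)) \to H_1(W)\bigr)$, and restricted to the torsion subgroup this is precisely the condition $t \in A$.

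With these ingredients in place, for every $t \in A$ the triple $(\Sigma(L), W, \spincs_t)$ satisfies every hypothesis of Corollary~\ref{cor:QHS1xB3}. That corollary then outputs the stated formulas for $d(\Sigma(L), \spincs_t, V)$ and $d^*(\Sigma(L), \spincs_t, V)$; specializing to $V = H_1^T(\Sigma(L))$ and $V = \{0\}$ recovers the values of $\dbot$ and $\dtop$. The main --- and essentially only nontrivial --- obstacle is the identification in the second paragraph of $A$ with the set of extendable \spinc\ twists; once that translation is made, the rest is a direct invocation of the intermediate correction-term machinery developed in the previous section.
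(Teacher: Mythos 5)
Your proposal is correct and follows essentially the same route as the paper: branched double cover of the slice disks as the cobounding rational homology $\natural^n S^1\times B^3$ (Kauffman--Taylor), the metabolizer from Lemma~\ref{lemma:metabolizer}, the Turaev/Donald--Owens identification of which \spinc\ structures $\spincs_o+t$ extend, and then Corollary~\ref{cor:QHS1xB3}. The only difference is that you spell out steps (standardness of $\HFi(\Sigma(L))$ via Lemma~\ref{L:fox} and Theorem~\ref{thm:Lidman}, and the duality translation between $\im j^*$ and $\ker j_*$) that the paper handles in the discussion preceding the theorem rather than in the proof itself.
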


\begin{proof}
Let $\Delta$ be a union of slice disks for $L$. The metabolizer $A$ is provided by Lemma \ref{lemma:metabolizer}. For any $t\in A$, the $\spinc$ structure $\spincs_o +t$ extends over $\Sigma(\Delta)$, so the result follows by Corollary~\ref{cor:QHS1xB3}.
\end{proof}

At present, all examples of links that we can show to be non-slice using Theorem~\ref{T:link-d} can be be shown to be non-slice by some other, perhaps simpler, method. There are many links whose smooth concordance class is unknown (such as the topologically slice examples of Cochran, Friedl, and Teichner~\cite{cochran-friedl-teichner:newslice}), but the calculation of the requisite $d$-invariants for these examples seems challenging.

\renewcommand{\MR}[1]{}

\end{document}